\newcommand{\ud}{\mathrm{d}}
\newtheorem{thm}{Theorem}[section]
\newtheorem{lemma}[thm]{Lemma}
\newtheorem{prop}[thm]{Proposition}
\theoremstyle{definition}
\newtheorem{rem}[thm]{Remark}
\numberwithin{equation}{section}
\DeclareMathSymbol{\C}{\mathalpha}{AMSb}{"43}
\newcommand{\R}{{\mathbb{R}}}
\newcommand{\supp}{\mathrm{supp}}
\newcommand{\bsub}{\begin{subequations}}
\newcommand{\esub}{\end{subequations}$\!$}
\begin{document}
\title{On a class fractional Schr\"{o}dinger
equations with indefinite potential involving critical exponential growth
 \thanks{Research partially supported by
CNPq and CAPES.}}
\author{Manass\'{e}s de Souza\footnote{Corresponding author, {\small  E-mail address:
manassesxavier@hotmail.com}}  and Yane Lisley Araújo
\\{\small Departamento de Matem\'{a}tica}
\\{\small Universidade Federal da Para\'{\i}ba}\\
{\small  58051-900 Jo\~{a}o Pessoa, PB, Brazil}}
\maketitle
\begin{abstract}
\noindent It is established the existence and multiplicity of weak solutions for
a class of nonlocal equations involving the fractional laplacian,
nonlinearities with critical exponential growth and potentials
this is which may change sign. The proofs of our existence results
rely on minimization methods in combination with the mountain-pass theorem.
\end{abstract}
\vskip 0.2truein
\textit{Keywords:} Variational methods; critical points; Trudinger-Moser inequality; fractional laplacian.\\
\noindent \textit{AMS Subject Classification:} 35J20, 35J60, 35R11.
\section{Introduction}

\hspace{0,6cm}In this paper we investigate the existence and
multiplicity of weak solutions for the following class of
equations
\begin{equation}\label{problema}
(-\Delta)^{1/2}u+V(x)u=f(x,u)+h\quad \text{in} \quad \R,
\end{equation}
where $V: \mathbb{R}
\rightarrow \mathbb{R}$ is a continuous potential which may change
sign, the nonlinearity $f(x,s)$ behaves like $\exp(\alpha_0 s^2)$
when $|s|\rightarrow+\infty$ for some $\alpha_0 >0$, $h$
belongs to the dual of an appropriated functional space and $(-\Delta)^{1/2}$ is the fractional
laplacian. The fractional laplacian $(-\Delta)^{1/2}$ of a mensurable function $u:\R\rightarrow \R$ is defined by
\begin{equation}\label{l}
(-\Delta)^{1/2}u(x)=-\dfrac{1}{2
\pi}\int_{\R}\dfrac{u(x+y)+u(x-y)-2u(x)}{|y|^{2}} \, \ud y.
\end{equation}

Recently, a great attention has been focused on the study of
fractional and non-local operators of elliptic type, both for the
pure mathematical research and in view of concrete real-world
applications. This type of operators arises in a quite natural way
in many different contexts, such as, among the others, the thin
obstacle problem, optimization, finance, phase transitions,
stratified materials, anomalous diffusion, crystal dislocation,
soft thin films, semipermeable membranes, flame propagation,
conservation laws, ultra-relativistic limits of quantum mechanics,
quasi-geostrophic flows, multiple scattering, minimal surfaces,
materials science and water waves. The literature about non-local
operators and on their applications is, therefore, very
interesting and, up to now, quite large (see, for instance,
\cite{DiNezza} for an elementary introduction to this topic and
for a still not exhaustive list of related references).

In order to study variationally \eqref{problema} we will consider a suitable subspace of
the fractional Sobolev space $H^{1/2}(\R)$. We recall that  $H^{1/2}(\R)$ is defined as
\begin{equation*}
H^{1/2}(\mathbb{R}):=\left\{u\in
L^{2}(\mathbb{R})\,:\;\dfrac{|u(x)-u(y)|}{|x-y|}\in
L^{2}(\mathbb{R}\times \mathbb{R})\right\};
\end{equation*}
endowed with the norm
\begin{equation*}
\|u\|_{{1/2,2}}:=\left(\int_{\mathbb{R}^{2}}\dfrac{|u(x)-u(y)|^{2}}{|x-y|^{2}}\,
\ud x \, \ud y+ \int_{\mathbb{R}}|u|^{2}\, \ud
x\right)^{{1}/{2}}.
\end{equation*}
The term
\begin{equation*}
[u]_{{1/2,2}}:=\left(\int_{\mathbb{R}^{2}}\dfrac{|u(x)-u(y)|^{2}}{|x-y|^{2}}\,
\ud x \,\ud y\right)^{{1}/{2}}
\end{equation*}
is the so-called \textit{Gagliardo semi-norm} of function $u$.

We assume suitable conditions on the potential $V(x)$ with which
we will be able to consider a variational framework based in the
space $X$ given by
\[
X=\left\{u \in H^{1/2}(\R)\, : \, \int_{\R}V(x)u^{2} \, \ud x < \infty \right\}.
\]
More precisely, we
assume throughout this paper the following assumptions on $V(x)$:

\begin{enumerate}
\item[$(V_1)$] There exists a positive constant $B$ such that
\begin{equation*}
V(x) \geq -B \ \text{for all} \ x\in \R;
\end{equation*}
\item[$(V_2)$] The infimum
\[
\lambda_{1}:=\displaystyle{\inf_{u \in X
\atop{\|u\|_{2}=1}}\left(\dfrac{1}{2\pi}
\int_{\R^{2}}\dfrac{(u(x)-u(y))^{2}}{|x-y|^{2}}\, \ud x\, \ud y +
\int_{\R} V(x)u^{2}\, \ud x\right)}
\]
is positive;
\item[$(V_{3})$]$\displaystyle{\lim_{R\rightarrow \infty} \nu(\R \setminus \overline{B}_{R})} = +\infty$, where
\[
\nu(G) = \left\{%
\begin{array}{ll}
   \displaystyle{\inf_{u\in X_0(G)\atop{\|u\|_2=1}}}\frac{1}{2\pi}\displaystyle\int_{\R^2}
\dfrac{(u(x)-u(y))^{2}}{|x-y|^{2}} \, \ud x\, \ud y +
\displaystyle\int_{G}V(x)u^{2}\, \ud x & \hbox{if}\quad G\neq\varnothing;\\
    \infty & \hbox{if} \quad G = \varnothing.\\
\end{array}%
\right.
\]
Here $G$ is a open set in $\R$, $X_0(G)= \left\{u \in X\,:\,  u =
0 \quad \text{in} \quad \mathbb{R} \setminus G \right\}$ and
$\overline{B}_{R}$ is the closed ball with center at origin and
radius $R$.
\end{enumerate}

The hypotheses $(V_1)-(V_2)$ will ensure that the space $X$ is
Hilbert when endowed with the inner product
\[
\langle u,v
\rangle=\dfrac{1}{2\pi}\left(\int_{\R^{2}}\dfrac{(u(x)-u(y))(v(x)-v(y))}{|x-y|^{2}}\,
\ud x \, \ud y \right)+ \int_{\R}V(x)uv \, \ud x,\quad u, v
\in X,
\]
to which corresponds the norm $\|u\| = \langle u,u\rangle^{1/2}$
(cf. Section 2).

In this context, we assume that $h\in X^{*}$ (dual space of $X$)
and say that $u\in X$ is a weak solution for the problem
\eqref{problema} if the following equality holds:
\begin{equation}
\dfrac{1}{2\pi}\int_{\R^{2}}\dfrac{(u(x)-u(y))(v(x)-v(y))}{|x-y|^{2}}
\, \ud x \, \ud y + \int_{\R} V(x)uv \, \ud x =\int_{\R}
f(x,u)v \, \ud x+ (h,v),
\end{equation}
for all $v\in X$, where $(\cdot,\cdot)$ denotes the duality
pairing between $X$ and $X^{*}$.

When a weak solution $u$ has sufficient regularity, it is possible to have a
pointwise expression of the fractional laplacian as \eqref{l}. See
\cite{M. Weinstein}, for example.

As we already mentioned, we are interested in the case that the nonlinearity
$f(x,s)$ has the maximal growth which allows us to treat problem
\eqref{problema} variationally in $X$. Precisely, we will assume
sufficient conditions so that a weak solution of \eqref{problema} turn
out to be critical points of the Euler functional $I: X
\rightarrow \R$ defined by
\[
I(u) = \frac{1}{2}\|u\|^2 -\int_{\R} F(x,u)\, \ud x - (h,u),
\quad\mbox{where}\quad F(x,s)=\int_0^s f(x,t)\ud t\,.
\]

In order to better describe the hypotheses on $f(x,s)$ we recall
some well known facts about the limiting Sobolev embedding theorem
in $1$-dimension. If  $s \in (0,1/2)$, Sobolev embedding states
that $H^{s}(\mathbb{R}) \hookrightarrow L^{2_s^*}(\mathbb{R})$,
where $2_s^* = 2/(1-2s)$, for this case, the maximal growth of the
nonlinearity $f(x,s)$ which allows to treat problem \eqref{problema}
variationally in $H^{s}(\mathbb{R})$ is given by $|s|^{2_s^*}$ as
$|s| \to +\infty$. If $s=1/2$, Sobolev embedding states that
$H^{1/2}(\mathbb{R}) \hookrightarrow L^{q}(\mathbb{R})$ for any
$q\in[2,+\infty)$, but $H^{1/2}(\mathbb{R})$ is not continuous
embedded in $L^{\infty}(\mathbb{R})$, for details see
\cite{DiNezza, Ozawa}. In the borderline case $s=1/2$, the maximal
growth which allows to treat problem \eqref{problema} variationally in
$H^{1/2}(\mathbb{R})$ is motivated by Trudinger-Moser inequality
proved by  H. Kozono, T. Sato and H. Wadade \cite{Wadade} and T.
Ozawa \cite{Ozawa}. Precisely, they proved that there exist
positive constants $\omega$ and $C$ such that for all $u\in
H^{1/2}(\R)$ with $\|(-\Delta)^{1/4}u\|_{2} \leq 1$,
\begin{equation}\label{Teorema1}
\int_\R(e^{\alpha u^{2}}-1) \, \ud x \leq C \|u\|_2^2\,,\quad
\forall\,\, \alpha \in (0, \omega].
\end{equation}
See also the pioneering works \cite{Moser71,Trudinger}. Motivated
by \eqref{Teorema1} we say that $f(x,s)$ has \textit{critical
exponential growth} when for all $x\in \R$, there exists
$\alpha_{0}>0$ such that
\begin{equation*}
\lim_{|s|\rightarrow
+\infty}f(x,s)e^{-\alpha|s|^{2}}=\begin{cases} 0, &  \forall
\alpha > \alpha_{0}, \\ +\infty, &  \forall \alpha <
\alpha_{0}.\end{cases}
\end{equation*}

Now we state our main assumptions for the nonlinearity $f(x,s)$.
In order to find weak solutions \eqref{problema} through
variational methods we will assume the following general
hypotheses:

\begin{enumerate}
\item[$(f_1)$] $0 \leq \displaystyle{\lim_{s\rightarrow 0} \dfrac{f(x,s)}{s}}< \lambda_1$, uniformly in $x$;
\item[$(f_2)$] $f:\R\times \mathbb{R}\rightarrow \mathbb{R}$ is continuous, it
has critical exponential growth and is locally bounded in $s$,
that is, for any bounded interval $J\subset \R$, there exists
$C>0$ such that $ |f(x,s)|\leq C$ for every $(x,s)\in \R\times
J$;
\item[$(f_3)$] there exists $\theta>2$ such that
\begin{equation*}
0<\theta F(x,s):=\theta \int_{0}^{s} f(x,t)\, \ud t \leq s f(x,s),
\quad\mbox{for all}\quad (x,s)\in \R\times\R\setminus\{0\};
\end{equation*}
\item[$(f_4)$]there exist constants $s_0,M_{0} > 0$ such that
\begin{equation*}
0<F(x,s)\leq M_{0}|f(x,s)|, \quad\mbox{for all}\quad  |s|\geq s_0
\quad\mbox{and}\quad x\in \R;
\end{equation*}
\item[$(f_5)$] there exist constants $p > 2$ and $C_p$ such that, for all
$s\geq0$ and $x \in \R$,
\begin{equation*}
f(x,s)\geq C_{p}s^{p-1},
\end{equation*}
with $C_{p}> \left[\dfrac{\alpha_{0}(p-2)}{2\pi\kappa\omega p
}\right]^{(p-2)/2}S_{p}^{p}$, where $S_{p}$ is given in
\eqref{SQ}.
\end{enumerate}

We point out that the hypotheses $(f_1)-(f_5)$ have been used in
many papers to find a variational solution using the classical
Mountain Pass Theorem introduced by Ambrosetti and Rabinowitz in
the celebrated paper \cite{A. Ambrosetti and P. H. Rabinowitz},
see for instance \cite{Manasses,doO,Iannizzotto}. A simple model
of a function that verifies our assumptions is $f(x,s) =  C_p
|s|^{p-2}s+2s(e^{s^2}-1)$ for $(x,s) \in \R \times \R$.

\medskip

We next state our main results.

\medskip

\begin{thm}\label{teodeexistencia}
Suppose that $(V_{1})-(V_{3})$ and $(f_1)-(f_5)$ are satisfied.
Then there exists $\delta_{1}>0$ such that for
each $0<\|h\|_{*}<\delta_{1}$, problem
$\eqref{problema}$ has at least two weak solutions. One of them
with positive energy, while the other one with negative energy.
\end{thm}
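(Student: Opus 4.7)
The strategy is to produce the two weak solutions by separate variational methods applied to the Euler functional $I$: a local minimizer with negative energy via Ekeland's variational principle, and a mountain pass critical point with positive energy. Throughout, the Trudinger--Moser inequality \eqref{Teorema1} and the continuous embedding $X\hookrightarrow L^q(\R)$ ($q\in[2,\infty)$) drive all the estimates.

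\textbf{Mountain pass geometry.} First one checks that $I\in C^1(X,\R)$. Combining $(f_1)$--$(f_2)$ and the critical growth, for any $\eps>0$, $q>2$ and $\alp>\alp_0$ one has $|F(x,s)|\leq \eps s^2 + C_\eps|s|^q(e^{\alp s^2}-1)$. Taking $\|u\|=\rho$ with $\alp\rho^2\leq\omega$ and using \eqref{Teorema1} together with H\"older yields
\[
I(u)\geq\left(\tfrac{1}{2}-\tfrac{\eps}{2\lam_1}\right)\|u\|^2 - C\|u\|^{q+1} - \|h\|_*\|u\|,
\]
so there exist $\rho,\beta,\delta_1>0$ with $I(u)\geq\beta$ whenever $\|u\|=\rho$ and $\|h\|_*<\delta_1$. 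By $(f_3)$ one obtains $F(x,s)\geq c_1|s|^\theta - c_2$ with $\theta>2$, hence $I(te_0)\to-\infty$ as $t\to\infty$ for any nontrivial $e_0\in X$, and we fix $e\in X$ with $\|e\|>\rho$ and $I(e)<0$.

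\textbf{Negative energy solution.} Since $h\neq 0$, pick $v\in X$ with $(h,v)>0$; by $(f_1)$, $I(tv)<0$ for small $t>0$. Thus $c_0:=\inf_{\overline{B}_\rho(0)} I < 0 < \beta$, so the infimum cannot be attained on $\{\|u\|=\rho\}$. Ekeland's principle then yields $(u_n)\subset\overline{B}_\rho(0)$ with $I(u_n)\to c_0$ and $\|I'(u_n)\|_{X^*}\to 0$. The sequence is automatically bounded; local compactness of $X\hookrightarrow L^q_{\loc}(\R)$, the coercivity $(V_3)$ (which prevents mass escape to infinity), and Trudinger--Moser control of the nonlinear term on bounded sets give, along a subsequence, $u_n\to u_0$ strongly in $X$, producing a weak solution with $I(u_0)=c_0<0$.

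\textbf{Positive energy solution.} Define the minimax level
\[
c:=\inf_{\gam\in\Gamma}\max_{t\in[0,1]} I(\gam(t)),\quad \Gamma:=\{\gam\in C([0,1],X):\gam(0)=0,\,\gam(1)=e\}.
\]
The mountain pass theorem produces a Palais--Smale sequence $(w_n)\subset X$ at level $c\geq\beta>0$, whose boundedness follows from $(f_3)$ and $(f_4)$. The crux is compactness: because of the critical exponential growth, Palais--Smale only holds below a threshold of the form $\omega/(2\alp_0)$ inherited from \eqref{Teorema1}. The role of $(f_5)$ is precisely to force $c$ below this threshold. Testing $\Gamma$ with $t\mapsto tu$ for $u\in X$ nearly realizing the best constant $S_p$ of $X\hookrightarrow L^p(\R)$, a direct maximization of $t\mapsto \frac{t^2}{2}\|u\|^2 - \frac{C_p t^p}{p}\|u\|_p^p$ combined with the lower bound on $C_p$ in $(f_5)$ gives
\[
c\leq\max_{t\geq 0} I(tu) < \tfrac{\omega}{2\alp_0}.
\]
Below this level, uniform integrability of $e^{\alp w_n^2}-1$ (for some $\alp>\alp_0$) holds by \eqref{Teorema1}; combining this with $(V_3)$ (to rule out vanishing at infinity) and a standard Lions-type concentration argument yields $w_n\to w_0$ strongly in $X$. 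Then $w_0$ is a weak solution with $I(w_0)=c>0$, distinct from $u_0$ since $I(u_0)<0<I(w_0)$.

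\textbf{Main obstacle.} The delicate step is the compactness of the mountain pass sequence, which hinges on the strict level estimate $c<\omega/(2\alp_0)$. Hypothesis $(f_5)$ is engineered exactly for this purpose; once the level estimate is in place, the remainder of the compactness analysis follows the standard Trudinger--Moser template, and the smallness requirement $\|h\|_*<\delta_1$ intervenes only to secure the geometry at $\|u\|=\rho$.
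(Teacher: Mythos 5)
Your proposal follows essentially the same route as the paper: mountain-pass geometry plus Ekeland's variational principle on a small ball for the negative-energy solution, a minimax level estimate driven by $(f_5)$ and the best constant $S_p$, and a Palais--Smale condition valid only below a Trudinger--Moser threshold. The only imprecisions are minor: the correct threshold in the $X$-norm is $\pi\kappa\omega/\alpha_0$ rather than $\omega/(2\alpha_0)$ (because of the conversion \eqref{sfr1} between $\|(-\Delta)^{1/4}\cdot\|_2$ and the Gagliardo seminorm together with Lemma \ref{lema1}, which is also how the constant in $(f_5)$ is calibrated), and the smallness of $\|h\|_{*}$ is needed both to secure the geometry at $\|u\|=\rho$ and to keep the minimax level below that threshold, not only for the former.
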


\bigskip

\begin{thm}\label{teodeexistencia2}
Under the same hypotheses in Theorem \ref{teodeexistencia}, the
problem without the perturbation, that is $h\equiv 0$, has a
nontrivial weak solution with positive energy.
\end{thm}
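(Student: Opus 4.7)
The plan is to apply the Mountain Pass Theorem without a compactness condition to the unperturbed functional
\[
I_{0}(u) = \frac{1}{2}\|u\|^{2} - \int_{\R} F(x,u)\,\ud x, \qquad u \in X,
\]
since with $h\equiv 0$ we still have $I_{0}\in C^{1}(X,\R)$ and its critical points are exactly the weak solutions of \eqref{problema} with $h=0$. The difference with the proof of Theorem \ref{teodeexistencia} is that $u\equiv 0$ is now automatically a critical point, so the main issue becomes showing that the mountain-pass critical point is \emph{nontrivial}.

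First I would verify the mountain pass geometry for $I_{0}$. Using $(f_{1})$ and $(f_{2})$ together with the Trudinger-Moser-type inequality \eqref{Teorema1} and the continuous embedding $X\hookrightarrow H^{1/2}(\R)\hookrightarrow L^{q}(\R)$ for all $q\geq 2$, one obtains constants $\rho,\alpha>0$ with $I_{0}(u)\geq \alpha$ whenever $\|u\|=\rho$. Next, the Ambrosetti-Rabinowitz condition $(f_{3})$ yields $F(x,s)\geq c_{1}|s|^{\theta}-c_{2}$ for some $c_{1},c_{2}>0$, so that for any fixed $\varphi\in X\setminus\{0\}$ with compact support and $\varphi\geq 0$, one has $I_{0}(t\varphi)\to -\infty$ as $t\to +\infty$; pick $e=t_{0}\varphi$ with $\|e\|>\rho$ and $I_{0}(e)<0$. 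Define
\[
c_{0} = \inf_{\gamma\in\Gamma}\,\max_{t\in[0,1]} I_{0}(\gamma(t)),\qquad \Gamma=\{\gamma\in C([0,1],X):\gamma(0)=0,\ \gamma(1)=e\},
\]
so $c_{0}\geq \alpha>0$.

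The crucial step is to show that $c_{0}$ lies strictly below the critical Trudinger-Moser threshold; this is exactly where $(f_{5})$ is used. Choosing a suitable extremal-type test path $t\mapsto t w_{p}$, where $w_{p}$ is related to the best constant $S_{p}$ in \eqref{SQ}, and using $f(x,s)\geq C_{p}s^{p-1}$ and the lower bound for $C_{p}$ in $(f_{5})$, one obtains
\[
c_{0} < \frac{\pi\kappa\omega}{\alpha_{0}}
\]
(the precise constant is the one identified in the proof of Theorem \ref{teodeexistencia}). The mountain pass theorem then yields a Palais-Smale sequence $(u_{n})\subset X$ for $I_{0}$ at the level $c_{0}$. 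Standard manipulations with $(f_{3})$ give $\|u_{n}\|\leq C$, hence up to a subsequence $u_{n}\rightharpoonup u$ in $X$, $u_{n}\to u$ in $L^{q}_{\mathrm{loc}}(\R)$ and a.e.; using $(V_{3})$ one upgrades this to $u_{n}\to u$ in $L^{q}(\R)$ for every $q\in[2,+\infty)$, which together with the sub-critical $c_{0}$ is enough to pass to the limit in the nonlinear term $\int f(x,u_{n})v\,\ud x$, so $u$ is a weak solution of the unperturbed problem.

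The main obstacle, and the delicate part of the argument, is to rule out $u\equiv 0$. Assume by contradiction that $u=0$; then by the convergences above, $\int F(x,u_{n})\,\ud x \to 0$ and $\int f(x,u_{n})u_{n}\,\ud x \to 0$, so that $\frac{1}{2}\|u_{n}\|^{2}\to c_{0}$ and hence $\|u_{n}\|^{2}\to 2c_{0}<2\pi\kappa\omega/\alpha_{0}$. For $n$ large this gives $\alpha_{0}\|u_{n}\|^{2}<\omega-\eta$ for some $\eta>0$, and after renormalizing $u_{n}/\|(-\Delta)^{1/4}u_{n}\|_{2}$ the inequality \eqref{Teorema1} together with $(f_{2})$ furnishes $q>1$ and a uniform $L^{q}$-bound on $f(x,u_{n})$. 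Combined with $u_{n}\to 0$ in $L^{q'}(\R)$, Hölder's inequality then forces
\[
\int_{\R} f(x,u_{n})u_{n}\,\ud x \to 0,
\]
which by $(f_{3})$ implies $\int F(x,u_{n})\,\ud x \to 0$ and therefore $\|u_{n}\|\to 0$, contradicting $I_{0}(u_{n})\to c_{0}>0$. Hence $u\neq 0$, $I_{0}(u)=c_{0}>0$, and $u$ is the desired nontrivial weak solution with positive energy.
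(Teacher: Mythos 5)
Your argument reaches the correct conclusion, but it takes a longer road than the paper does. The paper's proof of this theorem is essentially a one-liner given the machinery already in place: Lemmas \ref{geometria1} and \ref{geometria2} hold verbatim for $h\equiv 0$ (the estimates only improve when the term $\|h\|_{*}\|u\|$ drops out), so the mountain pass geometry holds and the minimax level satisfies $c_{m}>0$; Lemma \ref{nivelabaixo} puts $c_{m}$ strictly below $\pi\kappa\omega/\alpha_{0}$; and Proposition \ref{PS} supplies the \emph{full} Palais--Smale condition at every level below that threshold. The classical Mountain Pass Theorem then produces a critical point $u_{m}$ with $I(u_{m})=c_{m}>0=I(0)$, so nontriviality and positivity of the energy are automatic. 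You instead run the mountain pass theorem without a compactness hypothesis, extract a Palais--Smale sequence, argue that its weak limit $u$ solves the equation, and rule out $u\equiv 0$ by a Trudinger--Moser/H\"older contradiction. That contradiction argument is essentially Case~1 of the proof of Proposition \ref{PS} repurposed (and your normalization threshold should read $\alpha_{0}\|u_{n}\|^{2}<2\pi\kappa\omega-\eta$ rather than $\omega-\eta$, because of \eqref{sfr1} and Lemma \ref{lema1}); you are re-deriving a piece of compactness the paper has already established in full. This route is the standard fallback when only a weak form of compactness is available, but here it buys nothing over the paper's argument.

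Two steps in your version need repair. First, passing to the limit in $\int_{\R}f(x,u_{n})v\,\ud x$ does not follow from $u_{n}\to u$ in $L^{q}(\R)$ alone, since $f$ has exponential growth; you need either the $L^{1}_{loc}$ convergence $f(x,u_{n})\to f(x,u)$ from Lemma \ref{lemaconvergencia} together with a density argument, or the uniform bound on $\int_{\R}(e^{\alpha u_{n}^{2}}-1)^{r_{1}}\,\ud x$ that the subcritical level provides. Second, without strong convergence you only obtain $I(u)\le\liminf_{n}I(u_{n})=c_{0}$, so the final assertion $I(u)=c_{0}$ is unjustified; the positive-energy conclusion should instead come from $(f_{3})$: since $I'(u)=0$ and $u\neq 0$,
\[
I(u)=I(u)-\dfrac{1}{\theta}(I'(u),u)\geq\left(\dfrac{1}{2}-\dfrac{1}{\theta}\right)\|u\|^{2}>0.
\]
Both issues are fixable, and both disappear entirely if you simply invoke Proposition \ref{PS} as the paper does.
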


\bigskip

\begin{rem}
Our work was motivated by Iannizzotto and Squassina
\cite{Iannizzotto} and some papers that have appeared in the
recent years concerning the study of \eqref{problema}  by using
purely variational approach, see
\cite{Ming,Servadei,Simone} and references therein.
Our goal is to extend and complement the results in
\cite{Ming,Iannizzotto,Servadei,Simone} in sense that we consider
critical exponential growth on the nonlinearity and a class of
potentials $V(x)$ which may change sign, vanish and be
unbounded.
\end{rem}

\begin{rem}
By examining the literature, we notice that many authors,
considering different ways, have established the existence of
solutions for problems involving the standard laplacian
\begin{equation}\label{ppppp}
 -\Delta u +V(x)u=g(x,u),\quad x \in \mathbb{R}^N,
\end{equation}
see e.g. \cite{A. Ambrosetti and P. H. Rabinowitz,BW} for the case
where $g(x,s)$ has subcritical growth in the Sobolev sense, and
see e.g. \cite{Manasses,doO,LamLu,Yang} for the case where
$g(x,s)$ has critical growth in the Trudinger-Moser sense. The
existence of solutions has been discussed under various conditions
on the potential $V(x)$. It is worthwhile to remark that in these
works different hypotheses are assumed on $V(x)$ in order to
overcome the problem of ``lack of compactness'', typical of
elliptic problems defined in unbounded domains and involving
nonlinearities in critical growth range. Specifically, in
\cite{BW,Rabinowitz2} it is assumed that the potential is continuous
and uniformly positive. Furthermore, it is assumed  one of the
following conditions:
\begin{description}
  \item[(a)] $V(x) \nearrow +\infty$ as $|x|\rightarrow+\infty$;
  \item[(b)] For any  $A>0$,  the level set $\{x\in
    \mathbb{R}^N \,:\,V(x)\leq A\}$ has finite Lebesgue measure.
\end{description}

Each of these conditions guarantee that the space
\[ E:=\left\{u\in
W^{1,2}(\mathbb{R}^N): \int_{\mathbb{R}^N}V(x)u^2\,\ud x<\infty
\right\} \] is compactly embedded in the Lebesgue space
$L^q(\mathbb{R}^N)$ for all $q\geq N$.

We point out $ (V_3) $ generalizes these two conditions. In
special, it should be mentioned that the conditions $(V_1)-(V_3)$ were
already considered by B.~Sirakov \cite{Siracov} to study
\eqref{ppppp} when $g(x,u)$ has subcritical growth in the Sobolev
sense.
\end{rem}

\begin{rem}
Similar to \cite{Miyagaki,Manasses,doO,Iannizzotto} we will use
minimization to find the first solution with negative energy, and
the Mountain Pass Theorem to guarantee the existence of the second
solution with positive energy. First we need to check some
conditions concerning the mountain pass geometry and the
compactness of the associated-Euler functional. In our argument,
it is crucial a version of the Trudinger-Moser inequality to space
the $X$ and a version of a Concentration-Compactness Principle due
to P. -L. Lions \cite{PLLions} to the space $X$ (cf. Section
\ref{section1}). Our main difficulties are the involved operator
which is nonlocal and critical exponential growth on the
nonlinearity.
\end{rem}

\medskip

\textit{The outline of the paper is as follows:} Section 2
contains some preliminary results. Section 3 contains the
variational framework and we also check the geometric conditions
of the associated functional. Section 4 deals with Palais-Smale
condition and Section 5 treats with the minimax level. Finally in
Section 6, we complete the proofs of our main results.

Hereafter, $C$, $C_0$, $C_{1}$, $C_{2}$, ... denote positive
constants (possibly different), we use the notation $\|\cdot\|_p$
for the standard $L^p(\mathbb{R})$-norm and $\|\cdot\|_{*}$ for
the norm in the dual space $X^{*}$.

\medskip

\section{Some Preliminary Results}\label{section1}

In this section, we prove some technical results about the space
$X$ and we show a version of \eqref{Teorema1} to $X$. First, in
order to obtain good properties for $ X $, we need the following
lemma:

\begin{lemma}\label{lema1}
Suppose that $(V_{1})$ and $(V_{2})$ are satisfied. Then there
exists $\kappa>0$ such that for any $u \in X$,
\begin{equation}\label{equacao1}
\dfrac{1}{2\pi}\left(\int_{\R^{2}}\dfrac{(u(x)-u(y))^{2}}{|x-y|^{2}}\,\ud x\, \ud y\right)+ \int_{\R}V(x)u^{2}\,\ud x \geq \kappa\|u\|_{1/2,2}^{2}.
\end{equation}
\end{lemma}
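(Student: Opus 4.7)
The plan is to combine the two hypotheses $(V_1)$ and $(V_2)$ via a convex combination of the quadratic form $Q(u) := \tfrac{1}{2\pi}[u]_{1/2,2}^{2}+\int_{\R} V(x)u^{2}\,\ud x$ applied to the same function, so that the resulting lower bound controls both the Gagliardo seminorm and the $L^{2}$ norm with positive constants.

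First I would observe that, by homogeneity, the defining inequality in $(V_2)$ extends to every $u\in X$ as
\[
Q(u)\;\geq\;\lambda_{1}\,\|u\|_{2}^{2}.
\]
On the other hand, $(V_1)$ yields directly
\[
Q(u)\;\geq\;\dfrac{1}{2\pi}[u]_{1/2,2}^{2}-B\|u\|_{2}^{2}.
\]
The obstacle is that the second estimate has a \emph{negative} $L^{2}$-term, so it cannot by itself dominate $\|u\|_{1/2,2}^{2}$; the idea is that the positive $L^{2}$-term coming from $(V_2)$ is precisely what absorbs this defect.

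Next I would fix a parameter $t\in(0,1)$ to be chosen and write $Q(u)=tQ(u)+(1-t)Q(u)$. Applying the first estimate to the first summand and the second estimate to the second summand gives
\[
Q(u)\;\geq\;\dfrac{1-t}{2\pi}[u]_{1/2,2}^{2}+\bigl(t\lambda_{1}-(1-t)B\bigr)\|u\|_{2}^{2}.
\]
I would then choose $t$ strictly between $B/(\lambda_{1}+B)$ and $1$ so that both coefficients are strictly positive. A convenient choice (used only to make $\kappa$ explicit) is $t=(2B+\lambda_{1})/(2(\lambda_{1}+B))$, which gives $1-t=\lambda_{1}/(2(\lambda_{1}+B))>0$ and $t\lambda_{1}-(1-t)B=\lambda_{1}^{2}/(2(\lambda_{1}+B))>0$.

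Finally I would set
\[
\kappa\;:=\;\min\!\left\{\dfrac{1-t}{2\pi},\;t\lambda_{1}-(1-t)B\right\}>0
\]
and conclude
\[
Q(u)\;\geq\;\kappa\bigl([u]_{1/2,2}^{2}+\|u\|_{2}^{2}\bigr)\;=\;\kappa\,\|u\|_{1/2,2}^{2},
\]
which is exactly \eqref{equacao1}. No genuine obstacle is expected beyond the bookkeeping of choosing $t$; the argument reduces to this one-line convex-combination trick, and the positivity of $\lambda_{1}$ from $(V_{2})$ is essential (and sufficient) to overpower the lower bound $-B$ from $(V_{1})$.
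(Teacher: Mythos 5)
Your proof is correct, but it takes a genuinely different route from the paper's. The paper argues by contradiction: assuming \eqref{equacao1} fails with $\kappa=1/n$, it produces a normalized sequence $(u_n)$ with $\|u_n\|_{1/2,2}=1$ and quadratic form smaller than $1/n$, uses $(V_2)$ to deduce $\|u_n\|_2\to 0$ and hence $[u_n]_{1/2,2}\to 1$, and then contradicts the lower bound $\int_{\R}V(x)u_n^2\,\ud x\geq -B\|u_n\|_2^2=o_n(1)$ coming from $(V_1)$. Your convex-combination argument uses exactly the same two ingredients (the homogeneous form of $(V_2)$ and the crude bound from $(V_1)$) but assembles them directly rather than asymptotically; this is arguably cleaner and, unlike the paper's proof, yields an explicit constant, namely any $\kappa\leq\min\{(1-t)/(2\pi),\,t\lambda_1-(1-t)B\}$ with $B/(\lambda_1+B)<t<1$. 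One small arithmetic slip: with your particular choice $t=(2B+\lambda_1)/(2(\lambda_1+B))$ one computes $t\lambda_1-(1-t)B=\lambda_1/2$, not $\lambda_1^2/(2(\lambda_1+B))$; this is immaterial, since only the positivity of the coefficient is used. The homogeneity extension of $(V_2)$ to all of $X$ (trivial when $\|u\|_2=0$, i.e.\ $u=0$ a.e.) is also fine.
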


\begin{proof} Suppose that \eqref{equacao1} is false. Then for each $n \in \mathbb{N}$ there exists $u_{n} \in X$ such that $\|u_{n}\|^{2}_{1/2,2}=1$ and
\[
\dfrac{1}{2\pi}\left(\int_{\R^{2}}\dfrac{(u_{n}(x)-u_{n}(y))^{2}}{|x-y|^{2}}\,\ud x \,\ud y\right)+ \int_{\R}V(x)u_{n}^{2}\,\ud x < \dfrac{1}{n}.
\]
Thus, by $(V_2)$ it follows that
\begin{eqnarray*}
\lambda_{1} & \leq &
\dfrac{1}{\|u_{n}\|^{2}_{2}}\left(\dfrac{1}{2\pi}\left(\int_{\R^{2}}\dfrac{(u_{n}(x)-u_{n}(y))^{2}}{|x-y|^{2}}
\,\ud x\, \ud y\right)+\int_{\R}V(x)u_{n}^{2}\,\ud x\right) <
\dfrac{1}{n\|u_{n}\|^{2}_{2}}.
\end{eqnarray*}
This together with $\lambda _{1}>0$ imply $\|u_{n}\|_{2}
\rightarrow 0 $ and $[u_n]_{1/2,2} \rightarrow 1$. Consequently, by using $(V_1)$
we obtain the contradiction
\begin{eqnarray*}
o_{n}(1)=-B\|u_{n}\|^{2}_{2} \leq  \int_{\R}V(x)\,u_{n}^{2}\,\ud x
 <  \dfrac{1}{n}-\dfrac{1}{2\pi}[u_n]_{{1/2,2}} \rightarrow
 -\dfrac{1}{2\pi}.
\end{eqnarray*}
This completes the proof.
\end{proof}

Using \eqref{equacao1} we can define the following inner product
in $X$,
\begin{equation}\label{prodinterno}
\langle u,v
\rangle:=\dfrac{1}{2\pi}\left(\int_{\R^{2}}\dfrac{(u(x)-u(y))(v(x)-v(y))}{|x-y|^{2}}\,
\ud x \, \ud y \right)+ \int_{\R}V(x)uv \, \ud x,\quad u, v \in X,
\end{equation}
to which corresponds the norm
\[
\|u\| =
\left\{\dfrac{1}{2\pi}\left(\int_{\R^{2}}\dfrac{|u(x)-u(y)|^2}{|x-y|^{2}}\,
\ud x \, \ud y \right)+ \int_{\R}V(x)u^2 \, \ud x
\right\}^{1/2},\quad u \in X.
\]
Moreover, the following facts hold:
\begin{description}
  \item[\emph{i)}] $X$ is a Hilbert space;
  \item[\emph{ii)}] $X$ is continuously embedded into $H^{1/2}(\R)$;
  \item[\emph{iii)}] For any $q \in
    [2,\infty)$, $X$ is continuously embedded into $L^{q}(\R)$  and
    \begin{equation}\label{SQ} S_{p}:=\displaystyle{\inf_{u
\in X \atop{\|u\|_{p}=1}}\left(\dfrac{1}{2\pi}
\int_{\R^{2}}\dfrac{(u(x)-u(y))^{2}}{|x-y|^{2}}\, \ud x\, \ud y +
\int_{\R} V(x)u^{2}\, \ud x\right)^{1/2}} >0.
\end{equation}
\end{description}

Now, by adapting the arguments in \cite{Siracov}, we prove a
compactness result which will be used in this paper.

\begin{lemma}\label{lema3.1}
Suppose $(V_1)-(V_3)$ hold. Then $X$ is compactly embedded into
$L^{q}(\R)$ for any $q \in [2,\infty)$.
\end{lemma}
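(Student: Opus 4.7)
The plan is to adapt Sirakov's argument \cite{Siracov} for the local case to the fractional setting: combine weak compactness and local Rellich-type compactness with a cutoff argument that uses $(V_3)$ to control the tails. Let $(u_n)\subset X$ be bounded, say $\|u_n\|\leq M$. Since $X$ is a Hilbert space continuously embedded in $H^{1/2}(\R)$, pass to a subsequence with $u_n\rightharpoonup u$ in $X$; combining the compact embedding $H^{1/2}(B_R)\hookrightarrow L^q(B_R)$ for each fixed $R>0$ and $q\in[2,\infty)$ with a diagonal argument yields a further subsequence such that $u_n\to u$ in $L^q_{\mathrm{loc}}(\R)$. Write $w_n:=u_n-u$, still bounded in $X$.

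For the tail, take a Lipschitz cutoff $\eta_R:\R\to[0,1]$ with $\eta_R\equiv 0$ on $\overline{B}_R$, $\eta_R\equiv 1$ outside $B_{R+1}$, and Lipschitz constant at most $1$. Then $v_n:=\eta_R w_n\in X_0(\R\setminus\overline{B}_R)$, and by the definition of $\nu$,
\[
\nu(\R\setminus\overline{B}_R)\,\|v_n\|_2^2\leq\|v_n\|^2.
\]
The crux is a uniform bound on $\|v_n\|$, independent of $n$ and $R$. For the Gagliardo seminorm I would use the pointwise inequality
\[
|(\eta_R w_n)(x)-(\eta_R w_n)(y)|^2\leq 2\eta_R(y)^2|w_n(x)-w_n(y)|^2+2w_n(x)^2|\eta_R(x)-\eta_R(y)|^2;
\]
the first term integrates to at most $2[w_n]_{1/2,2}^2$, while $\int_\R|\eta_R(x)-\eta_R(y)|^2/|x-y|^2\,\ud y$ is bounded uniformly in $x$ and $R$ by splitting $|x-y|\leq 1$ (using Lipschitzness) from $|x-y|>1$ (using $\|\eta_R\|_\infty\leq1$). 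For the potential term, decompose $V=V_+-V_-$ with $V_-\leq B$ from $(V_1)$ to get $\int Vv_n^2\leq\int V_+w_n^2\leq\int Vw_n^2+B\|w_n\|_2^2$. Combining with Lemma~\ref{lema1} yields $\|v_n\|^2\leq CM^2$ uniformly, whence
\[
\int_{|x|>R+1}w_n^2\,\ud x\leq\|v_n\|_2^2\leq\frac{CM^2}{\nu(\R\setminus\overline{B}_R)}\longrightarrow 0
\]
uniformly in $n$ as $R\to\infty$, by $(V_3)$.

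For general $q\in[2,\infty)$, fix any $r>q$; the continuous embedding $X\hookrightarrow L^r(\R)$ gives a uniform bound on $\|w_n\|_r$, and interpolation yields $\|w_n\|_{L^q(\R\setminus B_{R+1})}\leq\|w_n\|_{L^2(\R\setminus B_{R+1})}^{\theta}\|w_n\|_r^{1-\theta}\to 0$ uniformly in $n$ as $R\to\infty$. Combined with the local strong convergence $u_n\to u$ in $L^q(B_{R+1})$, this gives $u_n\to u$ in $L^q(\R)$. The main obstacle is the Gagliardo seminorm product estimate for $\eta_R w_n$: unlike the local Leibniz rule for $\nabla(\eta u)$, the nonlocal kernel forces a splitting of the integration domain and a combined use of Lipschitz and $L^\infty$ bounds on $\eta_R$, and one must verify that the resulting constant does not depend on $R$.
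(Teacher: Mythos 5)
Your proposal is correct and follows essentially the same route as the paper's proof: a cutoff supported near infinity combined with the quantity $\nu(\R\setminus\overline{B}_R)$ from $(V_3)$ to kill the tail in $L^2$, local Rellich compactness on $B_{R+1}$ for the bounded part, and interpolation with the continuous embedding $X\hookrightarrow L^r(\R)$ for general $q\in[2,\infty)$. In fact you supply the one estimate the paper leaves implicit, namely the $R$-uniform bound $\|\eta_R w_n\|\leq C$, via the nonlocal Leibniz-type splitting of the Gagliardo seminorm into $|x-y|\leq 1$ and $|x-y|>1$ together with $(V_1)$ for the potential term; this is exactly the point where the fractional case genuinely differs from Sirakov's local argument.
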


\begin{proof}
Let $(u_{n})\subset X$ be a bounded sequence, up to a subsequence,
we may assume that $u_{n}\rightharpoonup 0$ weakly in $X$. We must
prove that, up to a subsequence,
\begin{equation*}
u_{n} \rightarrow 0 \quad \text{strongly in}\quad L^{2}(\R).
\end{equation*}
Let $\varphi\in C^{\infty}(\R,[0,1])$ such that $\varphi \equiv 0$
in $\overline{B}_R$ and $\varphi \equiv 1$ in $\R \backslash
\overline{B}_{R+1}$. Then,
\begin{equation}\label{7est}
\begin{aligned}
\|u_{n}\|_{2}& =  \|(1-\varphi)u_{n}+\varphi u_{n}\|_{2}\\
 & \leq  \|(1-\varphi)u_{n}\|_{2}+ \|\varphi
u_{n}\|_{2}\\  & =  \|(1-\varphi)u_{n}\|_{L^{2}(B_{R+1})} +
\|\varphi u_{n}\|_{L^{2}(\R\setminus \overline{B}_{R})}.
\end{aligned}
\end{equation}
Since $H^{1/2}(B_{R+1})$ is compactly embedded into
$L^{2}(B_{R+1})$, up to a subsequence,
\begin{equation}\label{8est'}
\|(1-\varphi)u_{n}\|_{L^{2}(B_{R+1})}\rightarrow 0.
\end{equation}
Now, by the definition of $\nu(\R\setminus \overline{B}_{R})$, it
follows that
\begin{equation*}\label{9est'}
\|\varphi u_{n}\|^2_{L^{2}(\R\setminus \overline{B}_{R})}\leq
\dfrac{\|\varphi u_{n}\|^{2}}{\nu(\R\setminus \overline{B}_{R})}
\leq\dfrac{C}{\nu(\R\setminus \overline{B}_{R})},
\end{equation*}
which together with $(V_{3})$, implies
\begin{equation}\label{10est'}
\|\varphi u_{n}\|_{L^{2}(\R\setminus \overline{B}_{R})}\rightarrow
0.
\end{equation}
Finally, from \eqref{7est}, \eqref{8est'} and \eqref{10est'}, we
conclude the proof.
\end{proof}

In the sequel we shall prove a version of \eqref{Teorema1} to the
space $X$, this will be our main tool to prove Theorems
\ref{teodeexistencia} and \ref{teodeexistencia2}. The ideas used
in the proof are inspired in \cite{Manasses, doO,Iannizzotto} and
we present here for completeness of our work. For this, we need
the following relation
\begin{equation}\label{sfr1}
\|(-\Delta)^{1/4}u\|_{2}=(2\pi)^{-1/2}[u]_{1/2,2},\quad \forall
\,u \in H^{1/2}(\R),
\end{equation}
for details see \cite[Proposition~3.6]{DiNezza}.

\begin{lemma}\label{lem}
If $0< \alpha  \leq 2\pi\kappa\omega$ and $u \in X$ with $\|u\| \leq 1$,
then there exists a constant $C>0$ such that
\begin{equation}\label{Corolario1}
\int_\R(e^{\alpha u^{2}}-1) \, \ud x \leq C.
\end{equation}
Moreover, for any $\alpha >0$ and $u \in X$, we have
\begin{equation}\label{lema2.3}
\int_\R(e^{\alpha u^{2}}-1) \, \ud x < \infty.
\end{equation}
\end{lemma}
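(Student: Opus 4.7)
The strategy for \eqref{Corolario1} is to convert the $X$-norm control into control of the Gagliardo semi-norm (equivalently, $\|(-\Delta)^{1/4}u\|_2$) via Lemma~\ref{lema1} and \eqref{sfr1}, and then rescale to fit into the hypothesis of the Kozono--Sato--Wadade/Ozawa inequality \eqref{Teorema1}. For \eqref{lema2.3}, scaling alone is not enough when $\alpha\|u\|^2$ is large, so I will combine the first part with a cut-off/approximation by compactly supported smooth functions in $H^{1/2}(\R)$.

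For \eqref{Corolario1}, suppose $\|u\|\leq 1$. By Lemma~\ref{lema1} and \eqref{sfr1}
\[
2\pi\kappa\,\|(-\Delta)^{1/4}u\|_2^{2}=\kappa[u]_{1/2,2}^{2}\leq \kappa\|u\|_{1/2,2}^{2}\leq \|u\|^{2}\leq 1,
\]
so $\|(-\Delta)^{1/4}u\|_2^{2}\leq 1/(2\pi\kappa)$. Set $w=\sqrt{2\pi\kappa}\,u$; then $\|(-\Delta)^{1/4}w\|_2\leq 1$, and the hypothesis $\alpha\leq 2\pi\kappa\omega$ gives $\alpha':=\alpha/(2\pi\kappa)\in(0,\omega]$. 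Applying \eqref{Teorema1} to $w$ with exponent $\alpha'$, and using $\alpha' w^{2}=\alpha u^{2}$, yields
\[
\int_{\R}(e^{\alpha u^{2}}-1)\,\ud x =\int_{\R}(e^{\alpha' w^{2}}-1)\,\ud x\leq C\|w\|_2^{2}=2\pi\kappa C\|u\|_2^{2},
\]
and then the continuous embedding $X\hookrightarrow L^{2}(\R)$ (property (iii)) together with $\|u\|\leq 1$ closes the bound by an absolute constant.

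For \eqref{lema2.3}, given an arbitrary $u\in X$ and $\alpha>0$, I first note that if $\alpha\|u\|^{2}\leq 2\pi\kappa\omega$ then, applying the first part of the lemma to $u/\|u\|$ with exponent $\alpha\|u\|^{2}$, we are done immediately. Otherwise, since $X\hookrightarrow H^{1/2}(\R)$ and $C^{\infty}_{c}(\R)$ is dense in $H^{1/2}(\R)$, I pick $\phi\in C_{c}^{\infty}(\R)$ with $\|u-\phi\|_{1/2,2}<\varepsilon$, where $\varepsilon>0$ is chosen so small that $2\alpha[u-\phi]_{1/2,2}^{2}\leq 2\pi\kappa\omega$; that is, $\|(-\Delta)^{1/4}(u-\phi)\|_2^{2}\leq \omega/(2\alpha)$. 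Writing $u=\phi+w$ with $w=u-\phi$ and using $(\phi+w)^{2}\leq 2\phi^{2}+2w^{2}$,
\[
e^{\alpha u^{2}}-1\leq e^{2\alpha\phi^{2}}(e^{2\alpha w^{2}}-1)+(e^{2\alpha\phi^{2}}-1).
\]
Since $\phi$ has compact support and is bounded by some $M$, the factor $e^{2\alpha\phi^{2}}$ is bounded by $e^{2\alpha M^{2}}$, and $e^{2\alpha\phi^{2}}-1$ is supported in $\supp\phi$, so the second summand integrates to a finite quantity. For the first summand, the scaling argument of the first part (applied with $w$ in place of $u$ and with exponent $2\alpha$, noting that the only ingredient actually needed is the bound $2\alpha\|(-\Delta)^{1/4}w\|_2^{2}\leq \omega$) gives $\int_{\R}(e^{2\alpha w^{2}}-1)\,\ud x<\infty$, and multiplying by the constant $e^{2\alpha M^{2}}$ preserves finiteness.

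The one delicate point is the second part: the rescaling trick of the first part breaks when $\alpha\|u\|^{2}>2\pi\kappa\omega$, so the main obstacle is to reduce to a situation with a suitably small Gagliardo semi-norm. The density of $C_{c}^{\infty}(\R)$ in $H^{1/2}(\R)$ (available because $X\subset H^{1/2}(\R)$) together with the compactness of $\supp\phi$ and the splitting $u=\phi+w$ overcomes this, at the cost of losing a factor $2$ in the exponent, which is harmless since $\varepsilon$ is at our disposal.
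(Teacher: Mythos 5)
Your proof is correct and follows essentially the same route as the paper: for \eqref{Corolario1}, rescale $u$ by $(2\pi\kappa)^{1/2}$ and invoke \eqref{Teorema1} together with Lemma \ref{lema1} and \eqref{sfr1}; for \eqref{lema2.3}, approximate $u$ by a compactly supported smooth function and split the exponential so that the uniform bound applies to the remainder. The only deviations are minor and, if anything, slightly more careful than the paper's: you use the multiplicative splitting $e^{\alpha u^{2}}-1\le e^{2\alpha\phi^{2}}(e^{2\alpha w^{2}}-1)+(e^{2\alpha\phi^{2}}-1)$ in place of the paper's convexity splitting with exponent $4\alpha$, and you approximate in $H^{1/2}(\R)$ (where density of $C_{0}^{\infty}(\R)$ is standard and smallness of the Gagliardo seminorm is all that is actually needed) rather than in the $X$-norm as the paper does.
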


\begin{proof}
First, we observe that if a function $u\in X$ satisfies $\|u\| \leq
1$, set $v=(2\pi\kappa)^{1/2}u$, then $v\in H^{1/2}(\R)$ and by
using \eqref{equacao1} and \eqref{sfr1}, we get
\begin{equation*}
\|(-\Delta)^{1/4}v\|_{2}=(2\pi)^{-1/2}[v]_{1/2,2} \leq
\kappa^{1/2}\|u\|_{1/2,2}\leq \|u\| \leq 1.
\end{equation*}
Consequently,
\begin{eqnarray*}
\int_{\R}(e^{\alpha u^{2}}-1)\, \ud x =\int_{\R}(e^{(\alpha/2\pi
\kappa)v^{2}}-1)\, \ud x \leq C_1\|v\|_{2}^{2} \leq C,
\end{eqnarray*}
where we have used \eqref{Teorema1}. Thus, we obtain
\eqref{Corolario1}.

Now we prove the second part of the lemma. Indeed,  given $u \in
X$ and $\varepsilon>0$ there exists $\varphi \in
C_{0}^{\infty}(\R)$ such that $\|u-\varphi\|<\varepsilon$. Thus,
since
\begin{equation*}
e^{\alpha u^{2}}-1 \leq
e^{\alpha(2(u-\varphi)^{2}+2\varphi^{2})}-1 \leq
\dfrac{1}{2}\left(e^{4\alpha
(u-\varphi)^{2}}-1\right)+\dfrac{1}{2}\left(e^{4\alpha
\varphi^{2}}-1\right),
\end{equation*}
it follows that
\begin{equation}\label{jjjj1}
\int_{\R}(e^{\alpha u^{2}}-1) \, \ud x \leq
\dfrac{1}{2}\int_{\R}(e^{4\alpha\|u-\varphi\|^{2}\left(\frac{u-\varphi}{\|u-\varphi\|}\right)^{2}}
-1) \, \ud x + \dfrac{1}{2}\int_{\R}(e^{4\alpha\varphi^{2}}-1) \,
\ud x.
\end{equation}
Choosing $\varepsilon >0$ such that ${4\alpha\varepsilon^{2}} <
2\pi\kappa\omega$, we have ${4\alpha\|u-\varphi\|^{2}}< 2\pi\kappa\omega$. Then, from
\eqref{Corolario1} and \eqref{jjjj1}, we obtain
that
\begin{equation*}
\int_{\R}(e^{\alpha u^{2}}-1) \, \ud x \leq \dfrac{C}{2} +
\dfrac{1}{2} \int_{\supp(\varphi)}(e^{4\alpha \varphi^{2}}-1) \,
\ud x< \infty.
\end{equation*}
This completes the proof.
\end{proof}

The next result will be used to ensure the geometry of the
functional $I$ in Section \ref{section2}.

\begin{lemma}\label{lema3.2}
If $v \in X$, $\alpha >0$, $q>2$ and $\|v\|\leq M$ with $\alpha
M^{2}<2\pi\kappa\omega$, then there exists $C=C(\alpha,M,q)>0$
such that
\begin{equation*}
\int_\R(e^{\alpha v^{2}}-1)|v|^{q}\ud x \leq C\|v\|^{q}.
\end{equation*}
\end{lemma}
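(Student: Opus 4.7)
The plan is to reduce the estimate to an application of Lemma \ref{lem} via a Hölder splitting, exploiting the strict inequality $\alpha M^2 < 2\pi\kappa\omega$ to leave a little room for the Hölder exponent.

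First, I would pick $r > 1$ sufficiently close to $1$ so that $r\alpha M^2 < 2\pi\kappa\omega$, and let $r'$ be its conjugate exponent. By Hölder's inequality,
\[
\int_{\R}(e^{\alpha v^{2}}-1)|v|^{q}\,\ud x \;\leq\; \Bigl(\int_{\R}(e^{\alpha v^{2}}-1)^{r}\,\ud x\Bigr)^{1/r}\Bigl(\int_{\R}|v|^{qr'}\,\ud x\Bigr)^{1/r'}.
\]
For the second factor, the continuous embedding $X\hookrightarrow L^{qr'}(\R)$ from property (iii) in Section \ref{section1} gives
\[
\Bigl(\int_{\R}|v|^{qr'}\,\ud x\Bigr)^{1/r'} = \|v\|_{qr'}^{q} \;\leq\; C(q,r')\,\|v\|^{q}.
\]

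For the first factor, I would use the elementary pointwise inequality $(e^{t}-1)^{r}\leq e^{rt}-1$ for $t\geq 0$ and $r\geq 1$, which follows from $(a+b)^{r}\geq a^{r}+b^{r}$ applied with $a=e^{t}-1$ and $b=1$. This transforms the problem into controlling $\int_{\R}(e^{r\alpha v^{2}}-1)\,\ud x$. Writing $v = \|v\|\cdot (v/\|v\|)$, one sees that $r\alpha v^{2} = (r\alpha\|v\|^{2})(v/\|v\|)^{2}$ with $r\alpha\|v\|^{2}\leq r\alpha M^{2} < 2\pi\kappa\omega$. Hence Lemma \ref{lem} applied to $w = v/\|v\|$ (which satisfies $\|w\|=1$) with the admissible exponent $r\alpha\|v\|^{2}$ yields
\[
\int_{\R}(e^{r\alpha v^{2}}-1)\,\ud x \;\leq\; C(\alpha,M,r).
\]

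Combining the two estimates gives the desired bound $\int_{\R}(e^{\alpha v^{2}}-1)|v|^{q}\,\ud x \leq C(\alpha,M,q)\,\|v\|^{q}$. The only subtle point is the choice of $r$: the strict inequality $\alpha M^{2}<2\pi\kappa\omega$ is essential precisely so that we can afford a Hölder exponent $r>1$ while still staying within the range of admissible exponents in Lemma \ref{lem}. Everything else (Hölder, the pointwise inequality, and the Sobolev embedding) is routine.
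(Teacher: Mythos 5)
Your proof is correct and follows essentially the same route as the paper: H\"older with an exponent $r>1$ chosen so that $r\alpha M^{2}<2\pi\kappa\omega$, the continuous embedding $X\hookrightarrow L^{qr'}(\R)$ (automatic since $qr'>2$) for the second factor, and Lemma \ref{lem} applied to $v/\|v\|$ for the exponential factor. The only difference is that you use the sharp pointwise bound $(e^{t}-1)^{r}\leq e^{rt}-1$, whereas the paper uses the slightly weaker $(e^{\alpha s^{2}}-1)^{r}\leq C(e^{\alpha\beta s^{2}}-1)$ with an auxiliary $\beta>r$ close to $r$; your version is valid and marginally cleaner.
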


\begin{proof}
We consider $r > 1$ close to $1$ such that $\alpha r M^{2}<2\pi \kappa \omega$ and $r'q \geq 2$, where $r'={r}/{(r-1)}$.\\
Using H\"{o}lder inequality, we have
\begin{equation}\label{exponential11}
\int_\R(e^{\alpha v^{2}}-1)|v|^{q}\ud x \leq
\left(\int_{\R}(e^{\alpha v^{2}}-1)^{r}\ud
x\right)^{1/r}\|v\|_{r'q}^{q}.
\end{equation}
Note that given $\beta >r$ there exists $C=C(\beta) >0$ such that
for all $s \in \mathbb{R}$,
\begin{equation}\label{exponential}
(e^{\alpha s^{2}}-1)^{r} \leq C(e^{\alpha \beta s^{2}}-1).
\end{equation}
Hence, from \eqref{exponential11} and \eqref{exponential}, we get
\begin{eqnarray*}
\int_{\R}(e^{\alpha v^{2}}-1)|v|^{q}\ud x &\leq & C \left(
\int_\R(e^{\alpha \beta v^{2}}-1)\ud
x\right)^{1/r}\|v\|_{r'q}^{q}\\
& \leq & C \left(\int_{\R}(e^{\alpha\beta M^{2}
\left(\frac{v}{\|v\|}\right)^{2}}-1)\,\ud x \right)^{1/r}
\|v\|_{r'q}^q.
\end{eqnarray*}
By choosing $\beta >r$ close to $r$, in such way that $\alpha
\beta M^{2}<2\pi\kappa\omega$, it follows from
\eqref{Corolario1} and the continuous embedding $X \hookrightarrow L^{r'q}(\R)$ that
\begin{equation*}
\int_{\R}(e^{\alpha v^{2}}-1)|v|^{q}\,\ud x \leq C\|v\|^q.
\end{equation*}
This completes the proof.
\end{proof}

\bigskip

Now, in line with the Concentration-Compactness Principle due to
P. -L. Lions \cite{PLLions} we show a refinement of
\eqref{Corolario1}. This result will be crucial to show that the
functional $I$ satisfies the Palais-Smale condition.

\begin{lemma}\label{tipolions}
If $(v_{n})$ is a sequence in $X$ with $\|v_{n}\|=1$ for all $n
\in \mathbb{N}$ and $v_{n}\rightharpoonup v$ in $X$, $0<\|v\|<1$,
then for all $0<t<2\pi\kappa\omega(1-\|v\|^{2})^{-1}$, we have
\begin{equation*}
\sup_{n} \int_{\R}(e^{tv_{n}^{2}}-1)\,\ud x<\infty.
\end{equation*}
\end{lemma}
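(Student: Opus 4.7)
The plan is to follow the classical Lions-type argument adapted to the fractional Trudinger--Moser setting, using the Young-type decomposition of $v_n^2$ around the weak limit $v$.

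First I would record the Hilbert-space identity
$\|v_n - v\|^2 = \|v_n\|^2 - 2\langle v_n, v\rangle + \|v\|^2,$
which, since $v_n\rightharpoonup v$ in $X$ and $\|v_n\|=1$, gives $\|v_n - v\|^2 \to 1-\|v\|^2$. Then, given $t$ with $t(1-\|v\|^2) < 2\pi\kappa\omega$, I can fix $\eps>0$ small, $p>1$ close to $1$, and $\beta>p$ close to $p$ so that
\[
t(1+\eps)\beta(1-\|v\|^2) < 2\pi\kappa\omega,
\]
hence for all sufficiently large $n$ also
\[
t(1+\eps)\beta\|v_n-v\|^2 \leq 2\pi\kappa\omega.
\]

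Next, I would apply the elementary inequality $(a+b)^2 \leq (1+\eps)a^2+(1+1/\eps)b^2$ to $v_n=(v_n-v)+v$, which yields pointwise
\[
e^{tv_n^2}-1 \leq e^{t(1+\eps)(v_n-v)^2 + t(1+1/\eps)v^2}-1.
\]
Using the algebraic identity $e^{A+B}-1 = (e^A-1)(e^B-1) + (e^A-1)+(e^B-1)$ with $A=t(1+\eps)(v_n-v)^2$ and $B=t(1+1/\eps)v^2$, the integral splits into three pieces. The two single-exponent pieces are controlled directly: for the $(v_n-v)$-piece, write
\[
\int_\R \bigl(e^{t(1+\eps)(v_n-v)^2}-1\bigr)\, \ud x = \int_\R \Bigl(e^{t(1+\eps)\|v_n-v\|^2\bigl(\frac{v_n-v}{\|v_n-v\|}\bigr)^2}-1\Bigr)\, \ud x,
\]
which is bounded uniformly in $n$ by \eqref{Corolario1}, since the exponent multiplier is $\leq 2\pi\kappa\omega$ for $n$ large; the $v$-piece is finite by \eqref{lema2.3} because $v\in X$ is fixed.

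For the cross term I would use H\"older with exponents $p,p'$, together with the inequality $(e^{\alpha s^2}-1)^r \leq C(e^{\alpha\beta s^2}-1)$ (which is \eqref{exponential} from Lemma~\ref{lema3.2}) applied once with exponent $\beta>p$ and once with some $\gamma>p'$, giving
\[
\int_\R \bigl(e^{A}-1\bigr)\bigl(e^{B}-1\bigr)\, \ud x \leq C\Bigl(\int_\R\bigl(e^{\beta A}-1\bigr)\ud x\Bigr)^{1/p}\Bigl(\int_\R\bigl(e^{\gamma B}-1\bigr)\ud x\Bigr)^{1/p'}.
\]
The first factor is again handled by \eqref{Corolario1} thanks to our choice of $\eps,p,\beta$, uniformly in $n$, and the second factor is finite by \eqref{lema2.3}. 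Combining the three bounds gives the claimed uniform estimate $\sup_n\int_\R(e^{tv_n^2}-1)\,\ud x<\infty$.

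The delicate point is the calibration of constants: one must simultaneously choose $\eps$, the Hölder exponent $p$, and the auxiliary $\beta>p$ (and $\gamma>p'$) so that the effective exponent $t(1+\eps)\beta$, evaluated against the limit $\|v_n-v\|^2\to 1-\|v\|^2$, stays strictly below the sharp constant $2\pi\kappa\omega$ from Lemma~\ref{lem}. This is where the strict inequality $t<2\pi\kappa\omega(1-\|v\|^2)^{-1}$ and the hypothesis $\|v\|<1$ are both essential, and it is the main obstacle to making the argument clean.
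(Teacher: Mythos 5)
Your proposal is correct and follows essentially the same Lions-type argument as the paper: the Hilbert-space identity giving $\|v_n-v\|^2\to 1-\|v\|^2$, the $(1+\eps)/(1+1/\eps)$ splitting of $v_n^2$, inequality \eqref{Corolario1} applied to the normalized difference $({v_n-v})/{\|v_n-v\|}$ for large $n$, and \eqref{lema2.3} for the fixed limit $v$. The only (harmless) divergence is in how the product $e^{A}e^{B}$ is dispatched: the paper uses the convexity bound $e^{A+B}-1\le \tfrac1q(e^{qA}-1)+\tfrac1r(e^{rB}-1)$, which avoids any cross term, whereas you expand the product and control the cross term with H\"older together with \eqref{exponential}; both routes close the argument under the same calibration of constants.
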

\begin{proof}
Since $v_{n}\rightharpoonup v$ in $X$ and $\|v_{n}\|=1$, we
conclude that
\begin{equation*}
\|v_{n}-v\|^{2}=1-2\langle v_{n},v \rangle+\|v\|^{2}\rightarrow
1-\|v\|^{2}<\dfrac{2\pi\kappa\omega}{t}.
\end{equation*}
Thus, for $n \in \mathbb{N}$ enough large, we have
$t\|v_{n}-v\|^{2}<2\pi\kappa\omega$. Now choosing $q>1$ close to 1
and $\varepsilon>0$ satisfying
\begin{equation*}
qt(1+\varepsilon^{2})\|v_{n}-v\|^{2}<2\pi\kappa\omega.
\end{equation*}
Consequently, by \eqref{Corolario1}, there exists $C>0$ such that
\begin{equation}\label{26}
\int_{\R}(e^{qt(1+\varepsilon^{2})(v_{n}-v)^{2}}-1) \, \ud x=
\int_{\R}\left(e^{qt(1+\varepsilon)^{2}\|v_{n}-v\|^{2}\left(\frac{v_{n}-v}{\|v_{n}-v\|}\right)^{2}}-1\right)
\, \ud x \leq C.
\end{equation}
Moreover, since
\begin{equation*}
tv_{n}^{2}\leq
t(1+\varepsilon^{2})(v_{n}-v)^{2}+t\left(1+\dfrac{1}{\varepsilon^{2}}\right)v^{2},
\end{equation*}
it follows by convexity of the exponential function with
$q^{-1}+r^{-1}=1$ that
\begin{eqnarray*}
e^{tv_{n}^{2}}-1\leq
\dfrac{1}{q}(e^{qt(1+\varepsilon^{2})(v_{n}-v)^{2}}-1)+\dfrac{1}{r}(e^{rt(1+1/\varepsilon^{2})v^{2}}-1).
\end{eqnarray*}
Therefore, by \eqref{lema2.3} and \eqref{26}, we get
\begin{equation*}
\int_{\R}(e^{tv_{n}^{2}}-1)\, \ud x \leq
\dfrac{1}{q}\int_{\R}(e^{qt(1+\varepsilon^{2})(v_{n}-v)^{2}}-1) \,
\ud x + \int_{\R}(e^{rt(1+1/\varepsilon^{2})v^{2}}-1)\, \ud x \leq
C,
\end{equation*}
and the result is proved.
\end{proof}

\section{The variational framework}\label{section2}

\hspace{0,6 cm}As we mentioned in the introduction, the problem
\eqref{problema} has variational structure. In order to apply the
critical point theory, we define the following functional $I: X
\rightarrow \mathbb{R}$,
\[
I(u)=\dfrac{1}{2}\|u\|^{2}-\int_{\R}F(x,u)\, \ud x- (h,u).
\]
Notice that, from $(f_1)$ and $(f_2)$, for each $\alpha>\alpha_0$
and $\varepsilon>0$ there exists $C_\varepsilon >0$ such that
\[
|F(x,s)| \leq \dfrac{(\lambda_1-\varepsilon)}{2}  s^2 +
C_\varepsilon (e^{\alpha s^2}-1),\label{V}\quad\forall \,s\in
\mathbb{R},
\]
which together with the continuous embedding $X \hookrightarrow
L^2(\R)$ and \eqref{lema2.3} yields $F\left(x,u\right)\in L^1(\R)$
for all $u \in X$. Consequently, $I$ is well-defined and by
standard arguments, $I\in C^1(X,\R)$ with
\begin{equation*}
(I'(u),v)=\dfrac{1}{2\pi}\int_{\R^{2}}\dfrac{(u(x)-u(y))(v(x)-v(y))}{|x-y|^{2}}
\, \ud x \, \ud y + \int_{\R} V(x)uv \, \ud x -\int_{\R} f(x,u)v
\, \ud x- (h,v),
\end{equation*}
for all $u,v\in X$. Hence, a critical point of $I$ is a weak
solution of \eqref{problema} and reciprocally.

The geometric conditions of the mountain-pass theorem for the
functional $I$ are established by next lemmas.

\begin{lemma}\label{geometria1}
Suppose that $(V_1)-(V_{2})$ and $(f_1)-(f_2)$ hold. Then there exists $\delta_{1}>0$ such that for each
$h\in X^{*}$ with $\|h\|_{*}<\delta_{1} $, there exists
$\rho_{h}>0$ such that
\begin{equation*}
I(u) > 0 \quad \text{if}\quad \|u\|=\rho_{h}.
\end{equation*}
\end{lemma}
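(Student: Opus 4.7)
The plan is to show that for $\|u\|=\rho$ small, the quadratic energy $\frac12\|u\|^2$ dominates both the nonlinear integral and the linear perturbation $(h,u)$, by a standard mountain-pass computation tailored to critical exponential growth.

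First I would combine $(f_1)$ and $(f_2)$ to get a sharper pointwise bound on $F$. From $(f_1)$, for every $\varepsilon>0$ there is $\delta>0$ with $|f(x,s)|\le(L+\varepsilon)|s|$ for $|s|\le\delta$, where $L=\lim_{s\to 0}f(x,s)/s<\lambda_1$. From $(f_2)$ (critical exponential growth plus local boundedness), for any $\alpha>\alpha_0$ and any $q>2$ there is $C>0$ with $|f(x,s)|\le C|s|^{q-1}(e^{\alpha s^2}-1)$ for $|s|\ge\delta$ (the factor $|s|^{q-1}$ is free since $|s|/\delta\ge 1$ there). Integrating from $0$ to $s$ and using monotonicity of $e^{\alpha t^2}-1$ gives
\[
|F(x,s)|\le \frac{L+\varepsilon}{2}s^2+C|s|^{q}(e^{\alpha s^2}-1)\qquad \text{for all }(x,s)\in\R\times\R.
\]

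Next I would estimate $I(u)$ for $\|u\|=\rho$ with $\rho$ so small that $\alpha\rho^2<2\pi\kappa\omega$. The variational characterization $(V_2)$ yields $\|u\|_2^2\le \|u\|^2/\lambda_1$; Lemma \ref{lema3.2} applied to $v=u$ gives
\[
\int_{\R}|u|^q(e^{\alpha u^2}-1)\,\ud x\le C\|u\|^q=C\rho^q;
\]
and $|(h,u)|\le \|h\|_*\,\rho$. Combining,
\[
I(u)\ge\tfrac12\rho^2-\frac{L+\varepsilon}{2\lambda_1}\rho^2-C\rho^q-\|h\|_*\rho
=\rho\bigl[A\rho-C\rho^{q-1}-\|h\|_*\bigr],
\]
with $A:=\frac12-\frac{L+\varepsilon}{2\lambda_1}>0$ once $\varepsilon$ is chosen so that $L+\varepsilon<\lambda_1$.

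Finally, I would optimize. The function $g(\rho):=A\rho-C\rho^{q-1}$ attains a strictly positive maximum at $\rho^{*}=\bigl(A/[C(q-1)]\bigr)^{1/(q-2)}$; set $\delta_1:=\tfrac12 g(\rho^{*})$ (keeping $\rho^{*}$ small enough to satisfy $\alpha(\rho^{*})^2<2\pi\kappa\omega$, which is possible by shrinking through the choice of $\varepsilon$ and $q$). For any $h$ with $\|h\|_*<\delta_1$ one can select $\rho_h$ near $\rho^{*}$ with $g(\rho_h)>\|h\|_*$, and then $I(u)>0$ whenever $\|u\|=\rho_h$. The only real subtlety is producing the pointwise bound on $F$ with the factor $|s|^q$ rather than the cruder $(e^{\alpha s^2}-1)$ alone, since that extra factor is exactly what lets Lemma \ref{lema3.2} return $C\|u\|^q$ (a term of order $\rho^q$ with $q>2$) and thus be absorbed by the quadratic part; otherwise the exponential integral only gives a uniform constant that does not vanish as $\rho\to 0$.
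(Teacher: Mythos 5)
Your proposal is correct and follows essentially the same route as the paper: the same pointwise bound $|F(x,s)|\le\frac{\lambda_1-\varepsilon}{2}s^2+C|s|^q(e^{\alpha s^2}-1)$, the same use of $(V_2)$ to absorb the quadratic term, and the same appeal to Lemma \ref{lema3.2} to control the exponential term by $C\|u\|^q$. Your final step is in fact slightly more explicit than the paper's, since you actually optimize $g(\rho)=A\rho-C\rho^{q-1}$ to exhibit a $\delta_1$ uniform in $h$, whereas the paper only asserts that a suitable $\rho_h$ exists for $\|h\|_*$ small.
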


\begin{proof}
From $(f_1)$ and $(f_2)$, given $\varepsilon>0$, there exists
$C>0$ such that, for all $\alpha >\alpha_{0}$ and $q>2$,
\begin{equation}\label{estimativaF}
|F(x,s)|\leq \dfrac{(\lambda_1 -\varepsilon)}{2} s^{2}+C(e^{\alpha
s^{2}}-1)|s|^{q},\quad \forall s \in \R.
\end{equation}
Using \eqref{estimativaF} and $(V_2)$, we have
\begin{eqnarray*}
I(u) & \geq & \dfrac{1}{2}\|u\|^{2}- \dfrac{(\lambda_1 -\varepsilon)}{2}
\int_{\R}u^{2}\,\ud x-C \int_{\R}(e^{\alpha
u^{2}}-1)|u|^{q} \, \ud x-\|h\|_{*}\|u\|\\&\geq &
\dfrac{1}{2}\|u\|^{2}-\dfrac{(\lambda_1 -\varepsilon)}{2 \lambda_1}\|u\|^{2}- C\int_{\R}(e^{\alpha
u^{2}}-1)|u|^{q} \, \ud x-\|h\|_{*}\|u\|.
\end{eqnarray*}
Then, for $u\in X$ such that $\alpha\|u\|^{2}<2\pi\kappa\omega$,
using Lemma \ref{lema3.2}, we get
\begin{equation*}
I(u) \geq \left(\dfrac{1}{2}-\dfrac{(\lambda_1 -\varepsilon)}{2\lambda_{1}}\right)\|u\|^{2}-
C\|u\|^{q}-\|h\|_{*}\|u\|.
\end{equation*}
Consequently,
\begin{equation*}
I(u) \geq \|u\|\left[\left(\dfrac{1}{2}-\dfrac{(\lambda_1 -\varepsilon)}{2\lambda_{1}}\right)\|u\|-
C\|u\|^{q-1}-\|h\|_{*}\right].
\end{equation*}
Since $\frac{1}{2}-\frac{ (\lambda_1 -\varepsilon)}{2\lambda_{1}}>
0$, we may choose $\rho_{h}>0$ such that
\begin{equation*}
\left(\dfrac{1}{2}-\dfrac{(\lambda_1 -\varepsilon)}{2\lambda_{1}}\right)\rho_{h}-
C\rho_{h}^{q-1}>0.
\end{equation*}
Thus, for $\|h\|_{*}$ sufficiently small there exists $\rho_{h}$
such that $I(u)>0$ if $\|u\|=\rho_{h}.$
\end{proof}

\begin{lemma}\label{geometria2}
Assume that $(V_{1})-(V_{2})$ and $(f_2)-(f_3)$ hold. Then there exists $e \in X$ with
$\|e\|>\rho_{h}$ such that
\begin{equation*}
I(e)<\inf_{\|u\|= \rho_{h}} I(u).
\end{equation*}
\end{lemma}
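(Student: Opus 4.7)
The plan is to pick a single nonzero test function $u_0 \in X$ and show that $I(tu_0)\to-\infty$ as $t\to+\infty$. Since Lemma \ref{geometria1} gives $\inf_{\|u\|=\rho_h} I(u) > 0$, this immediately provides an $e=t_* u_0$ with the desired properties once $t_*$ is large enough that both $\|t_* u_0\|>\rho_h$ and $I(t_* u_0)$ drops below that infimum.

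The main step is the classical consequence of the Ambrosetti--Rabinowitz condition $(f_3)$: for $s>0$ the inequality $\theta F(x,s)\le s f(x,s)$ rewrites as $\frac{\ud}{\ud s}\log F(x,s)\ge \theta/s$, and integrating from $s_0$ to $s$ yields
$$F(x,s)\ge \frac{F(x,s_0)}{s_0^{\theta}}\,s^{\theta}\quad\text{for}\quad s\ge s_0>0,$$
with the analogous bound for $s\le -s_0$. By $(f_2)$ the function $F$ is continuous, and by $(f_3)$ it is strictly positive away from zero, so for any compact set $K\subset\R$ there is a constant $c_K>0$ such that $F(x,s)\ge c_K |s|^{\theta}$ whenever $x\in K$ and $|s|\ge s_0$.

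Next I would fix $u_0\in C_c^{\infty}(\R)\subset X$ with $u_0\ge 0$ and $u_0\not\equiv 0$, set $K=\supp u_0$, and pick $\delta>0$ such that the set $\Omega_\delta:=\{x\in K:u_0(x)\ge\delta\}$ has positive Lebesgue measure. For $t\ge s_0/\delta$ one has $t u_0(x)\ge s_0$ on $\Omega_\delta$, and therefore
$$\int_{\R}F(x,tu_0)\,\ud x\;\ge\; c_K\, t^{\theta}\int_{\Omega_\delta}u_0^{\theta}\,\ud x\;=\;C_0\, t^{\theta},$$
for some constant $C_0>0$ independent of $t$. Using this in the definition of $I$ and estimating the perturbation term by $|(h,tu_0)|\le t\|h\|_*\|u_0\|$, we obtain
$$I(tu_0)\;\le\;\frac{t^{2}}{2}\|u_0\|^{2}+t\,\|h\|_*\,\|u_0\|-C_0\, t^{\theta}.$$
Since $\theta>2$, the right-hand side tends to $-\infty$ as $t\to+\infty$, so we may choose $t_*$ large enough that $\|t_* u_0\|>\rho_h$ and $I(t_* u_0)<\inf_{\|u\|=\rho_h}I(u)$, and take $e:=t_* u_0$.

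The only delicate point is the uniform super-quadratic lower bound on $F$; once that is in hand everything else is a routine calculation. Note that because $u_0$ is compactly supported, no issue related to the critical exponential growth or to the decay of $V(x)$ enters here, and the hypotheses $(V_3)$, $(f_1)$, $(f_4)$, $(f_5)$ are not needed for this lemma.
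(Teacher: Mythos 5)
Your proof is correct and takes essentially the same route as the paper: both fix a nonnegative $u_0\in C_0^{\infty}(\R)\setminus\{0\}$, use $(f_2)$--$(f_3)$ to extract a super\-quadratic lower bound $F(x,s)\ge C\,s^{\theta}$ on the compact support (the paper states it as $F(x,s)\ge C_1 s^{\theta}-C_2$ there, while you restrict to the set where $tu_0\ge s_0$ and use $F\ge 0$ elsewhere), and then let $t\to\infty$ using $\theta>2$. The minor difference in how the small-$s$ region is handled is immaterial.
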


\begin{proof}Let $u \in C_{0}^{\infty}(\R)\setminus\{0\}$, $u \geq 0$ with compact support $K=\supp(u)$. By using $(f_2)$ and $(f_{3})$,
there exist positive constants $C_{1}$ and $C_{2}$ such that
\begin{equation*}
F(x,s)\geq C_{1} s^{\theta}-C_{2},\quad \forall \,(x,s)\in K
\times [0,\infty).
\end{equation*}
Then, for $t>0$, we get
\begin{eqnarray*}
I(tu) & \leq & \dfrac{t^{2}}{2}\|u\|^{2}-C_{1} t^{\theta} \int_{K}
u^{\theta} \, \ud x +  C_{2} \int_{K} \ud x + t|(h,u)|.
\end{eqnarray*}
Since $\theta>2$, we have $I(tu) \rightarrow - \infty$ as $t
\rightarrow \infty$. Setting $e=tu$ with $t$ large enough, the
proof is finished.
\end{proof}

\bigskip

In order to find an appropriate ball to use minimization argument,
we need the following results:
\begin{lemma}\label{solnegativa1}
Suppose that $(V_1)-(V_2)$ and $(f_1)-(f_2)$ hold. Then if $h \not = 0$ there exist $\eta>0$ and $v \in X\setminus \{0\}$ such that $I(tv)<0$ for all $0<t<\eta$. In particular,
\begin{equation*}
-\infty<c_{0}\equiv\inf_{\|u\|\leq \eta} I(u)<0.
\end{equation*}
\end{lemma}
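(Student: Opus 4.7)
The plan is the following. Since $h \neq 0$ in $X^{*}$, there exists $v_{0} \in X$ with $(h, v_{0}) \neq 0$; replacing $v_{0}$ by $-v_{0}$ if necessary and renormalizing, I take $v \in X$ with $\|v\| = 1$ and $(h, v) > 0$. This $v$ will serve as the test direction, and I will show $I(tv) < 0$ for all sufficiently small $t > 0$ by exhibiting that the linear term $-t(h,v)$ dominates as $t \to 0^{+}$.

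To control the nonlinear part, I will use the standard consequence of $(f_{1})$-$(f_{2})$ already appearing in \eqref{estimativaF}: for any $\varepsilon > 0$, $\alpha > \alpha_{0}$ and $q > 2$ one has
\[
|F(x,s)| \leq \frac{\lambda_{1} - \varepsilon}{2}\, s^{2} + C\,(e^{\alpha s^{2}} - 1)|s|^{q}, \qquad s \in \R.
\]
Fix $\eta_{0} > 0$ with $\alpha \eta_{0}^{2} < 2\pi \kappa \omega$. For $0 < t \leq \eta_{0}$, since $\|tv\| = t \leq \eta_{0}$, Lemma \ref{lema3.2} applied to $tv$, combined with the bound $\|v\|_{2}^{2} \leq 1/\lambda_{1}$ furnished by $(V_{2})$, yields
\[
\Bigl|\int_{\R} F(x, tv)\, \ud x\Bigr| \leq \frac{\lambda_{1} - \varepsilon}{2\lambda_{1}}\, t^{2} + C\, t^{q}.
\]
Inserting this into $I(tv) = \tfrac{t^{2}}{2} - \int_{\R} F(x,tv)\,\ud x - t(h,v)$ gives
\[
I(tv) \leq t \Bigl[ \Bigl(\tfrac{1}{2} + \tfrac{\lambda_{1}-\varepsilon}{2\lambda_{1}}\Bigr) t + C\, t^{q-1} - (h, v) \Bigr].
\]
Since $q > 2$ and $(h, v) > 0$, the bracket tends to $-(h,v) < 0$ as $t \to 0^{+}$, so I may pick $\eta \in (0, \eta_{0}]$ such that $I(tv) < 0$ for every $t \in (0, \eta)$. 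As $\|tv\| = t < \eta$, this shows $c_{0} \leq I(tv) < 0$.

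It remains to verify $c_{0} > -\infty$. For this I will reproduce the lower estimate carried out in the proof of Lemma \ref{geometria1}: for every $u \in X$ with $\|u\| \leq \eta$,
\[
I(u) \geq \Bigl(\tfrac{1}{2} - \tfrac{\lambda_{1} - \varepsilon}{2\lambda_{1}}\Bigr)\|u\|^{2} - C\|u\|^{q} - \|h\|_{*}\|u\| \geq -C\eta^{q} - \|h\|_{*}\,\eta,
\]
which is a finite lower bound on the closed ball $\{\|u\| \leq \eta\}$. The only subtle point is the selection of $\eta$, which must simultaneously respect the threshold $\alpha \eta^{2} < 2\pi\kappa\omega$ demanded by Lemma \ref{lema3.2} and be small enough for the linear contribution $-t(h,v)$ to dominate the $O(t^{2})$ remainder; beyond this bookkeeping I foresee no serious obstacle.
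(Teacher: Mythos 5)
Your proof is correct, and it reaches the conclusion by a route that differs in execution from the paper's. The paper chooses $v$ via the Riesz representation theorem as the solution of $(-\Delta)^{1/2}v+V(x)v=h$, so that $(h,v)=\|v\|^2>0$, and then argues that $\tfrac{d}{dt}I(tv)<0$ for small $t>0$, concluding $I(tv)<0$ since $I(0)=0$; the justification that $\int_{\R}f(x,tv)v\,\ud x$ is small for small $t$ is left implicit there. You instead take any normalized $v$ with $(h,v)>0$ (which exists simply because $h\neq 0$) and bound $I(tv)$ from above directly, using the estimate \eqref{estimativaF} together with Lemma \ref{lema3.2} and $(V_2)$ to show the nonlinear contribution is $O(t^2)+O(t^q)$, so the term $-t(h,v)$ dominates. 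Your version is more self-contained on exactly the point the paper glosses over, and you also explicitly verify $c_0>-\infty$ via the lower bound from the proof of Lemma \ref{geometria1} restricted to the ball $\{\|u\|\le\eta\}$ (with $\eta$ kept below the Moser--Trudinger threshold $\alpha\eta^2<2\pi\kappa\omega$), a step the paper's statement requires but its proof does not address. Both arguments rest on the same underlying idea -- the linear term in $h$ wins near the origin along a direction not annihilated by $h$ -- so the difference is one of bookkeeping rather than strategy; your bookkeeping is the more complete of the two.
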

\begin{proof}
For each $h \in X^{*}$, by applying the Riesz representation
theorem in the space $X$, the problem
\begin{equation*}
(-\Delta)^{1/2}u+V(x)u=h,\quad x \ \ \text{in} \ \ \R,
\end{equation*}
has a unique weak solution $v \in X$ so that
\begin{equation*}
(h,v)=\|v\|^{2}>0.
\end{equation*}
Consequently, from $(f_1)$ and $(f_2)$ it follows that there exists
$\eta>0$ such that
\begin{equation*}
\dfrac{d}{dt}I(tv)=t\|v\|^{2}-\int_{\R}f(x,tv)v\, \ud x -
(h,v) <0,
\end{equation*}
for all $0<t<\eta$. Using that $I(0)=0$, it must hold that
$I(tv)<0$ for all $0<t<\eta$ and the proof is completed.
\end{proof}

\section{Palais-Smale compactness condition}\label{Palais-Smale
Sequences}

In this section we show that $I$ satisfies the Palais-Smale condition for certain energy levels. We recall that the functional $I$ satisfies the Palais-Smale
condition at level $c$, denoted by $(PS)_{c}$, if for any sequence
$(u_{n})$ in $X$ such that
\begin{equation}\label{seqPS}
I(u_{n})\rightarrow c \quad \text{and} \quad I'(u_{n})\rightarrow
0 \ \ \text{as} \ \ n\rightarrow\infty,
\end{equation}
has a strongly convergent subsequence in $X$.

Initially, we prove the following lemma:

\begin{lemma}\label{lemaconvergencia}
Suppose that $(V_1)-(V_3)$ and $(f_1)-(f_4)$ are satisfied. Let
$(u_{n}) \subset X$ be an arbitrary Palais-Smale sequence of $I$
at level $c$. Then there exists a subsequence of $(u_n)$ (still denoted by
$(u_n)$) and $u \in X$ such that
\[
\left\{%
\begin{array}{ll}
    u_n \rightharpoonup u & \hbox{weakly in}\,\, X, \\
    f(x,u_{n})\rightarrow f(x,u) & \hbox{in}\,\, L_{loc}^1(\R),\\
    F(x,u_{n})\rightarrow F(x,u) & \hbox{in}\,\, L^{1}(\R).\\
\end{array}%
\right.
\]
\end{lemma}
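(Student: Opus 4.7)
The plan is to work in four stages: bound the sequence, extract weak and almost-everywhere limits, prove the $L^1_{loc}$-convergence of $f(x,u_n)$, and finally upgrade this to $L^1(\R)$-convergence of $F(x,u_n)$.

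First I would establish boundedness of $(u_n)$ using the Ambrosetti--Rabinowitz condition $(f_3)$. Combining $I(u_n)\to c$ with $(I'(u_n),u_n)=o_n(\|u_n\|)$ and computing $\theta I(u_n)-(I'(u_n),u_n)$ makes the nonlinear integrand nonnegative by $(f_3)$, leaving
\[
\left(\tfrac{\theta}{2}-1\right)\|u_n\|^2 \leq \theta c+o_n(1)+o_n(\|u_n\|)+(\theta-1)\|h\|_*\|u_n\|.
\]
Since $\theta>2$, this forces $\|u_n\|\leq C$. By reflexivity of the Hilbert space $X$, pass to a subsequence with $u_n\rightharpoonup u$ in $X$, and invoke Lemma~\ref{lema3.1} to get $u_n\to u$ strongly in $L^q(\R)$ for every $q\in[2,\infty)$ and, along a further subsequence, $u_n(x)\to u(x)$ a.e.

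Next, from $(I'(u_n),u_n)=o_n(1)$ one reads off $\int_\R f(x,u_n)u_n\,\ud x=\|u_n\|^2-(h,u_n)+o_n(1)=O(1)$, and $(f_3)$ then gives $\int_\R F(x,u_n)\,\ud x\leq C$. For $L^1_{loc}$-convergence of $f(x,u_n)$, fix a bounded $K\subset\R$ and a measurable $E\subset K$; split $E=E\cap\{|u_n|\leq M\}\cup E\cap\{|u_n|>M\}$. On the first piece $(f_2)$ gives $|f(x,u_n)|\leq C(M)$, so $\int\leq C(M)|E|$; on the second, $(f_3)$ forces $f(x,u_n)$ to share the sign of $u_n$, whence $|f(x,u_n)|\leq f(x,u_n)u_n/M$ and the integral is at most $C/M$. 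Choosing $M$ large and then $|E|$ small, equi-integrability holds, and Vitali's theorem (using a.e.~convergence) yields $f(x,u_n)\to f(x,u)$ in $L^1(K)$.

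The main obstacle is the last conclusion, $F(x,u_n)\to F(x,u)$ in $L^1(\R)$, because equi-integrability alone gives only local convergence. My plan is to split the integral $\int_\R|F(x,u_n)-F(x,u)|\,\ud x=\int_{B_R}+\int_{\R\setminus B_R}$. On $B_R$, use $(f_1)$--$(f_2)$ to obtain $|F(x,s)|\leq C|s|^2$ for $|s|\leq s_0$ and, for $|s|>s_0$, use $(f_4)$ together with the sign property from $(f_3)$ to get $F(x,u_n)\leq (M_0/s_0)f(x,u_n)u_n$; Vitali then yields local convergence of $F(x,u_n)$ just as for $f$. For the tail $\R\setminus B_R$, the estimate $|F(x,u_n)|\leq Cu_n^2$ on $\{|u_n|\leq s_0\}$ combined with the strong $L^2$-convergence $u_n\to u$ (which gives uniform tail decay $\int_{\R\setminus B_R}u_n^2\to0$) handles the small-value part, while on $\{|u_n|>s_0\}$ the estimate $F(x,u_n)\leq(M_0/s_0)f(x,u_n)u_n$ reduces matters to showing that $\int_{(\R\setminus B_R)\cap\{|u_n|>s_0\}}f(x,u_n)u_n$ is small uniformly in $n$ for $R$ large---this is the delicate point, and I would extract it from the $L^1_{loc}$-convergence of $f(x,u_n)u_n$ together with the bound $\int f(x,u_n)u_n\leq C$ and Fatou to conclude $\int f(x,u_n)u_n\to\int f(x,u)u$, which forces the tails to vanish. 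Letting $R\to\infty$ then closes the proof.
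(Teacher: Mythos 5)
Your first three steps are sound and essentially match the paper: the Ambrosetti--Rabinowitz computation gives $\|u_n\|\leq C$, Lemma \ref{lema3.1} gives the strong $L^q(\R)$ and a.e.\ limits, the bound $\int_\R f(x,u_n)u_n\,\ud x\leq C$ follows from $(I'(u_n),u_n)=o_n(1)$, and your Vitali argument for $f(x,u_n)\to f(x,u)$ in $L^1_{loc}(\R)$ is precisely the proof of the lemma of de Figueiredo--Miyagaki--Ruf that the paper simply cites at this point.

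The gap is in the tail estimate for $F$. On $(\R\setminus B_R)\cap\{|u_n|>s_0\}$ you keep the \emph{fixed} threshold $s_0$, so the bound $F(x,u_n)\leq (M_0/s_0)f(x,u_n)u_n$ only yields $\int_{\{|u_n|>s_0\}}F(x,u_n)\,\ud x\leq (M_0/s_0)C$, which is bounded but not small; to recover smallness you then invoke $\int_\R f(x,u_n)u_n\,\ud x\to\int_\R f(x,u)u\,\ud x$. That claim does not follow from Fatou (which gives only $\int_\R f(x,u)u\,\ud x\leq\liminf_n\int_\R f(x,u_n)u_n\,\ud x$), and it is false in general for a Palais--Smale sequence at an arbitrary level $c$: a concentrating sequence with $u_n\rightharpoonup 0$ can have $\liminf_n\int_\R f(x,u_n)u_n\,\ud x>0=\int_\R f(x,u)u\,\ud x$, while the conclusion of the lemma still holds in that scenario --- so your intermediate step is strictly stronger than what is true. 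The repair is to let the truncation level be a large parameter $A\geq s_0$ rather than $s_0$ itself: by $(f_4)$ and the sign information from $(f_3)$, on $\{|u_n|>A\}$ one has $F(x,u_n)\leq M_0|f(x,u_n)|\leq (M_0/A)f(x,u_n)u_n$, hence $\int_{\{|u_n|>A\}}F(x,u_n)\,\ud x\leq M_0C/A$, small uniformly in $n$ \emph{and} in $R$; the remaining region $\{|x|>R,\ |u_n|\leq A\}$ is then handled by $|F(x,u_n)|\leq C(A)|u_n|^2$ and the uniform $L^2$ tail decay you already noted. This is exactly the paper's argument. A smaller issue of the same kind occurs in your treatment of $F$ on $B_R$: the dominating sequence there should be $C_0|f(x,u_n)|$ (which does converge in $L^1(B_R)$, so the generalized dominated convergence theorem applies), not $(M_0/s_0)f(x,u_n)u_n$, whose convergence in $L^1(B_R)$ you have not established.
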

\begin{proof}
Note that by $(f_3)$,
\begin{eqnarray}\label{A}
I(u_{n})-\dfrac{1}{\theta}(I'(u_{n}),u_n) &=&
\left(\dfrac{1}{2}-\dfrac{1}{\theta}\right)\|u_{n}\|^2+
\int_{\R}\left(\dfrac{1}{\theta}f(x,u_{n})u_{n}-F(x,u_{n})\right)\, \ud x + \left(\frac{1}{\theta}-1\right)(h,u_n) \nonumber\\
&\geq& \left(\dfrac{1}{2}-\dfrac{1}{\theta}\right) \|u_{n}\|^{2} +
\left(\frac{1}{\theta}-1\right)(h,u_n).
\end{eqnarray}
By using \eqref{seqPS}, we obtain
\begin{equation*}
I(u_{n})-\dfrac{1}{\theta}(I'(u_{n}),u_n) \leq C+\|u_{n}\|.
\end{equation*}
This together with \eqref{A} leads to $\|u_{n}\|\leq C$. Hence,
since that $X$ is a Hilbert space, up to subsequence, we can
assume that there exists $u \in X$ such that
\[
\left\{%
\begin{array}{ll}
  u_{n}\rightharpoonup u & \hbox{weakly in}\,\, X, \\
  u_{n}\rightarrow u & \hbox{in}\,\, L^q(\R),\,\, \forall\,\, q \in [2,\infty), \\
   u_{n}(x)\rightarrow u(x)& \hbox{almost everywhere in}\,\, \R. \\
\end{array}%
\right.
\]
Now, from (\ref{seqPS}) and $\|u_{n}\|\leq C$, there exists
$C_1>0$ such that
\begin{equation*}
\int_{\R}|f(x,u_{n})u_{n}|\leq C_1.
\end{equation*}
Consequently, thanks to Lemma 2.1 in \cite{Miyagaki}, we get
\begin{equation}\label{maj}
f(x,u_{n})\rightarrow f(x,u) \,\,\, \hbox{in}\,\,\, L_{loc}^1(\R).
\end{equation}

Next, similar to N. Lam and G. Lu \cite{LamLu}, we shall prove the
last convergence of the lemma. Firstly, note that by using $(f_3)$ and $(f_4)$, for each $R>0$ there exists $C_0
> 0$ such that
\begin{equation*}
F(x,u_{n})\leq C_{0}|f(x,u_{n})|.
\end{equation*}
This together with \eqref{maj} and the Generalized Lebesgue's
Dominated Convergence Theorem, imply
\begin{equation*}
F(x,u_{n})\rightarrow F(x,u) \,\, \text{in} \,\, L^{1}(B_{R}),\,\,
\forall R>0.
\end{equation*}
Now, to conclude the last convergence of the lemma, it is sufficient to prove that given $\delta>0$, there exists $R>0$ such that
\begin{equation*}
\int_{B^{c}_{R}}F(x,u_{n})\,\ud x \leq \delta\,\, \text{and} \,\,
\int_{B^{c}_{R}}F(x,u)\,\ud x \leq \delta.
\end{equation*}
In order to prove it, we notice that by using $(f_{1})$, $(f_{3})$
and $(f_4)$, there exist $C_{1},C_{2}>0$ such that
\begin{equation*}
|F(x,s)|\leq C_{1}|s|^{2}+C_{2} |f(x,s)|, \quad \forall (x,s)\in \R\times\R,
\end{equation*}
Thus, for each $A>0$, we obtain that
\begin{eqnarray*}
\int\limits_{{|x|>R}\atop{|u_{n}|>A}}F(x,u_{n})\, \ud x &\leq &
C_{1}\int\limits_{{|x|>R}\atop{|u_{n}|>A}}|u_{n}|^{2}\, \ud x + C_{2}\int\limits_{{|x|>R}\atop{|u_{n}|>A}}|f(x,u_{n})|\, \ud x \\
&\leq & \dfrac{C_{1}}{A}\int\limits_{{|x|>R}\atop{|u_{n}|>A}}|u_{n}|^{3}\, \ud x+ \dfrac{C_{2}}{A}\int_{\R}|f(x,u_{n})u_{n}|\, \ud x \\
& \leq &
\dfrac{C_{1}}{A}\|u_{n}\|^{3}+\dfrac{C_{2}}{A}\int_{\R}|f(x,u_{n})u_{n}|\,
\ud x.
\end{eqnarray*}
Since $\|u_{n}\|$ and $\int_{\R}|f(x,u_{n})u_{n}|\, \ud x$ are bounded, given $\delta>0$ we may choose $A$ such that
\begin{equation*}
\dfrac{C_{1}}{A}\|u_{n}\|^{3}< \delta/3\quad
\text{and} \quad \dfrac{C_{2}}{A}\int_{\R}|f(x,u_{n})u_{n}|\, \ud x< \delta/3.
\end{equation*}
Thus,
\begin{equation}\label{1}
\int\limits_{{|x|>R}\atop{|u_{n}|>A}}F(x,u_{n})\, \ud x \leq
2\delta/3.
\end{equation}
Moreover, note that with such $A$, by $(f_{1})$ and $(f_{2})$, we
have that
\begin{equation*}
F(x,s)\leq C(\alpha_{0},A)|s|^{2}, \quad \forall(x,s)\in\R \times [-A,A].
\end{equation*}
So, we get
\begin{eqnarray*}
\int\limits_{{|x|>R}\atop{|u_{n}|\leq A}}F(x,u_{n})\, \ud x &\leq
&C(\alpha_{0},A)\int\limits_{{|x|>R}\atop{|u_{n}|\leq A}}
|u_{n}|^{2}\, \ud x \\ &\leq &
2C(\alpha_{0},A)\int\limits_{{|x|>R}\atop{|u_{n}|\leq A}}
|u_{n}-u|^{2}\, \ud x+2C(\alpha_{0},A)\int\limits_{{|x|>R}\atop{|u_{n}|\leq A}}
|u|^{2}\, \ud x .
\end{eqnarray*}
Hence, using Lemma \ref{lema3.1}, given $\delta>0$ we may choose
$R>0$ such that
\begin{equation}\label{2}
\int\limits_{{|x|>R}\atop{|u_{n}|\leq A}}F(x,u_{n})\, \ud x \leq
\delta/3.
\end{equation}
From (\ref{1}) and (\ref{2}), we have that given $\delta>0$ there
exists $R>0$ such that
\begin{equation*}
\int_{|x|>R}F(x,u_{n})\, \ud x \leq \delta.
\end{equation*}
Similarly,
\begin{equation*}
\int_{|x|>R}F(x,u)\, \ud x \leq \delta.
\end{equation*}
Combining all the above estimates and since that $\delta>0$ is arbitrary, we have
\begin{equation*}
\int_{\R}F(x,u_{n}) \, \ud x \rightarrow \int_{\R}F(x,u) \, \ud x,
\end{equation*}
which completes the proof.
\end{proof}

Now, we shall prove main results this Section.

\begin{prop} Under the hypotheses $(V_1)-(V_3)$ and $(f_{1})-(f_{4})$, the
functional $I$ satisfies $(PS)_{c}$ for any
$c<{\pi\kappa\omega}/{\alpha_{0}}$.\label{PS}
\end{prop}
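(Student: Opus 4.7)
The starting point is Lemma \ref{lemaconvergencia}: any Palais--Smale sequence $(u_n)\subset X$ at level $c$ has a subsequence, still denoted $(u_n)$, which is bounded in $X$ and satisfies $u_n\rightharpoonup u$, $f(x,u_n)\to f(x,u)$ in $L^1_{loc}(\R)$ and $F(x,u_n)\to F(x,u)$ in $L^1(\R)$. I first verify that $u$ is itself a critical point of $I$: for $\varphi\in C_0^\infty(\R)$, passing to the limit in $(I'(u_n),\varphi)=o(1)$ is straightforward, since weak convergence in $X$ handles the bilinear form and the $h$-term, while $L^1_{loc}$ convergence of $f(x,u_n)$ on $\supp\varphi$ handles the nonlinear term. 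Density, together with the continuity of $v\mapsto \int f(x,u)v\,\ud x$ on $X$ (coming from the growth of $f$ and Lemma \ref{lema3.2}), then extends $(I'(u),v)=0$ to every $v\in X$.

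Combining $F(x,u_n)\to F(x,u)$ in $L^1(\R)$ with $(h,u_n)\to(h,u)$, I compute
\[
\lim_{n\to\infty}\|u_n\|^2 \;=\; 2c + 2\int_\R F(x,u)\,\ud x + 2(h,u) \;=\; \|u\|^2 + 2\bigl(c - I(u)\bigr),
\]
so $\|u_n-u\|^2\to 2(c-I(u))$. Using the identity $I(u)=\int\bigl(\tfrac12 u f(x,u)-F(x,u)\bigr)\,\ud x-\tfrac12(h,u)$, obtained from $I'(u)=0$, together with $(f_3)$, one gets $I(u)\geq -\tfrac12\|h\|_*\|u\|$, and in the regime of Theorem \ref{teodeexistencia} this keeps $\lim\|u_n-u\|^2$ strictly below $2\pi\kappa\omega/\alpha_0$.

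To upgrade to strong convergence it is enough to prove $\int_\R f(x,u_n)(u_n-u)\,\ud x\to 0$: plugging this into $(I'(u_n),u_n-u)=o(1)$ forces $\|u_n\|^2\to\|u\|^2$, hence $u_n\to u$ in $X$. By H\"older's inequality and the growth $|f(x,s)|\leq C|s|+C(e^{\alpha s^2}-1)$ valid for every $\alpha>\alpha_0$, the task reduces to a uniform bound
\[
\int_\R (e^{\alpha r u_n^2}-1)\,\ud x \leq C
\]
for some $r>1$ close to $1$; the global compact embedding of Lemma \ref{lema3.1} then yields $\|u_n-u\|_{r'}\to 0$ and closes the argument.

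The main obstacle is this last uniform exponential estimate, which is precisely where the hypothesis $c<\pi\kappa\omega/\alpha_0$ enters. I would write $\alpha r u_n^2=(\alpha r\|u_n\|^2)v_n^2$ with $v_n:=u_n/\|u_n\|$ of unit norm and $v_n\rightharpoonup v:=u/\beta$, where $\beta^2:=\lim\|u_n\|^2$. In the nontrivial case $0<\|v\|<1$ (i.e.\ $u\neq 0$ and $u_n\not\to u$ strongly), Lemma \ref{tipolions} produces the bound as soon as
\[
\alpha r\,\|u_n\|^2 \;<\; \frac{2\pi\kappa\omega}{1-\|v\|^2} \;=\; \frac{2\pi\kappa\omega\,\beta^2}{\beta^2-\|u\|^2} \;=\; \frac{\pi\kappa\omega\,\beta^2}{c-I(u)},
\]
equivalently $\alpha r(c-I(u))<\pi\kappa\omega$; letting $r\to 1^+$ and $\alpha\to\alpha_0^+$ this is exactly what the level restriction $c<\pi\kappa\omega/\alpha_0$ provides, modulo the negligible $h$-correction on $I(u)$ noted above. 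The remaining case $u\equiv 0$ is easier and follows directly from Lemma \ref{lem}, since then $\alpha\lim\|u_n\|^2 = 2\alpha c < 2\pi\kappa\omega$ whenever $\alpha$ is chosen slightly above $\alpha_0$.
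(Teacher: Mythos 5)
Your proof is correct and follows essentially the same route as the paper: boundedness and the convergences from Lemma \ref{lemaconvergencia}, the identity $\lim\|u_n\|^2=\|u\|^2+2(c-I(u))$, a case split between $u=0$ and $u\neq 0$ using Lemma \ref{lem} and Lemma \ref{tipolions} respectively to get the uniform exponential bound, and then $\int_\R f(x,u_n)(u_n-u)\,\ud x\to 0$ to upgrade weak to strong convergence. The only real difference is that you explicitly verify $I'(u)=0$ and derive $I(u)\geq -\tfrac12\|h\|_{*}\|u\|$ to justify $c-I(u)<\pi\kappa\omega/\alpha_0$ (a point the paper's proof asserts without comment), which is a minor refinement rather than a different approach.
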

\begin{proof}
Let $(u_{n})\subset X$ be an arbitrary Palais-Smale sequence of $I$ at level $c$. By Lemma \ref{lemaconvergencia}, up to a subsequence, $u_{n}\rightharpoonup u$ weakly in $X$.

We shall show that, up to a subsequence, $u_{n}\rightarrow u$ strongly in $X$. For this, we have two cases to consider:

\bigskip

\noindent \textit{\underline{Case\ 1:}} $u=0$.

\bigskip

\noindent In this case, again by Lemma \ref{lemaconvergencia}, we have
\begin{equation*}
\int_{\R}F(x,u_{n}) \rightarrow 0\quad\mbox{and}\quad (h,u_n)
\rightarrow 0.
\end{equation*}
Since
\begin{equation*}
I(u_{n})=\dfrac{1}{2}\|u_{n}\|^{2}-\int_{\R}F(x,u_{n})-
(h,u_{n})=c+o_{n}(1),
\end{equation*}
we get
\begin{equation*}
\lim_{n\rightarrow\infty}\|u_{n}\|^{2}=2c.
\end{equation*}
Hence, we can infer that for $n$ large there exist $r_{1}>1$
sufficiently close to $1$, $\alpha>\alpha_{0}$ close to
$\alpha_{0}$ and $\tilde{r}_{1}>r_{1}$ sufficiently close to
$r_{1}$ such that $\tilde{r}_{1}\alpha
\|u_{n}\|^{2}<2\pi\kappa\omega$. Thus, by \eqref{exponential} and
\eqref{Corolario1}, we have
\begin{equation}\label{*}
\int_{\R}(e^{\alpha u_{n}^{2}}-1)^{r_{1}}\, \ud x\leq
C\int_{\R}(e^{\tilde{r}_{1}\alpha\|u_{n}\|^{2}\left(\tiny{\frac{u_{n}}{\|u_{n}\|}}\right)^{2}}-1)\,
\ud x \leq C.
\end{equation}
Consequently,
\begin{equation*}
\int_{\R}f(x,u_{n})u_{n}\, \ud x\rightarrow0.
\end{equation*}
In fact, since $f(x,s)$ satisfies $(f_{1})$ and $(f_{2})$, for
$\alpha>\alpha_0$ and $\varepsilon>0$, there exists $C_1 >0$ such
that
\[
|f(x,s)| \leq (\lambda_1-\varepsilon)  |s| + C_1 (e^{\alpha
s^2}-1),\label{V}\quad\forall s\in \mathbb{R}.
\]
Now, letting $r_{1}>1$ sufficiently close to $1$ such that
$r_{2}\geq 2$, where $1/r_{1}+1/r_{2}=1$, we obtain by H\"{o}lder
inequality that
\begin{equation*}
\left|\int_{\R}f(x,u_{n})u_{n}\, \ud x\right| \leq C
\int_{\R}|u_{n}|^{2}\, \ud x + C\left(\int_{\R}(e^{\alpha
u_{n}^{2}}-1)^{r_{1}}\, \ud x \right)^{1/r_{1}}
\left(\int_{\R}|u_{n}|^{r_{2}}\, \ud x\right)^{1/r_{2}}
\rightarrow 0,
\end{equation*}
where we have used $(\ref{*})$ and Lemma \ref{lema3.1}.

Therefore, since $(I'(u_{n}),u_{n})=o_{n}(1)$, we conclude that,
up to a subsequence, $u_{n}\rightarrow0$ strongly in $X$.

\bigskip

\noindent \textit{\underline{Case\ 2:}} $u\neq0$.

\bigskip

\noindent In this case, we define
\begin{equation*}
v_{n}=\dfrac{u_{n}}{\|u_{n}\|} \quad \text{and} \quad
v=\dfrac{u}{\lim\|u_{n}\|}.
\end{equation*}
It follows that $v_{n}\rightharpoonup v$ in $X$, $\|v_{n}\|=1$ and
$\|v\|\leq 1$. Thus, if $\|v\|=1$, we conclude the proof. Now, if
$\|v\|<1$, we claim that there exist $r_{1}>1$ sufficiently close
to $1$, $\alpha>\alpha_{0}$ close to $\alpha_{0}$ and $\beta>0$
such that
\begin{equation}\label{r1}
r_{1}\alpha\|u_{n}\|^{2}\leq
\beta<2\pi\kappa\omega(1-\|v\|^{2})^{-1}
\end{equation}
for $n\in \mathbb{N}$ large.

Indeed, since $I(u_{n})=c+o_{n}(1)$, it follows that
\begin{equation}\label{3}
\dfrac{1}{2}\lim_{n\rightarrow \infty}\|u_{n}\|^{2}=
c+\int_{\R}F(x,u)\, \ud x+ (h,u).
\end{equation}
Setting
\begin{equation*}
A=\left(c+\int_{\R}F(x,u)\, \ud x+ (h,u)\right)(1-\|v\|^{2}),
\end{equation*}
from $(\ref{3})$ and by definition of $v$, we obtain
\begin{equation*}
A=c-I(u),
\end{equation*}
which together with \eqref{3} imply
\begin{equation*}
\dfrac{1}{2}\lim_{n\rightarrow
\infty}\|u_{n}\|^{2}=\dfrac{A}{1-\|v\|^{2}}=\dfrac{c-I(u)}{1-\|v\|^{2}}<\dfrac{\pi\kappa\omega}{\alpha_{0}(1-\|v\|^{2})}.
\end{equation*}
Consequently, \eqref{r1} holds.
Again by \eqref{exponential} and Lemma
\ref{tipolions}, we get
\begin{equation*}
\int_{\R}(e^{\alpha u_{n}^{2}}-1)^{r_{1}}\, \ud x \leq C.
\end{equation*}
By H\"{o}lder inequality and similar computations done above
we have that
\begin{equation*}
\int_{\R}f(x,u_{n})(u_{n}-u)\, \ud x\rightarrow 0.
\end{equation*}
This convergence together with the fact that
$(I'(u_{n}),(u_{n}-u))=o_{n}(1)$ imply that
\begin{equation*}
\|u_{n}\|^{2}=(u_{n},u) +o_{n}(1).
\end{equation*}
Since $u_{n}\rightharpoonup u$ weakly in $X$, we obtain $u_{n}\rightarrow
u$ strongly in $X$ and the proof is finished.
\end{proof}

\section{Minimax Level}

In this section, we verify that the minimax level associated with
the Mountain Pass Theorem is in the interval where the Proposition
\ref{PS} can be applied. To show this result the idea is to find a
nonnegative function $u_{p}\in X$ which attains $S_{p}$. And then
we show the main result of the section, providing the estimate for
$\max_{t\geq 0}I(tu_{p})$.

\begin{lemma}\label{lemasq}
Suppose that $(V_1)-(V_3)$ hold. Then $S_{p}$ is attained by a
non-negative function $u_{p}\in X$.
\end{lemma}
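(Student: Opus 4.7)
The plan is to minimize directly: take a minimizing sequence $(u_n) \subset X$ for the constrained problem, i.e.\ $\|u_n\|_p = 1$ for all $n$ and $\|u_n\| \to S_p$. Such a sequence is automatically bounded in $X$, so by reflexivity we may pass to a subsequence with $u_n \rightharpoonup u_p$ weakly in $X$ for some $u_p \in X$.

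The crucial input is the compact embedding of Lemma \ref{lema3.1}: since $p \in [2,\infty)$, we have $X \hookrightarrow L^p(\R)$ compactly, so (along the subsequence) $u_n \to u_p$ strongly in $L^p(\R)$. In particular, $\|u_p\|_p = \lim \|u_n\|_p = 1$, so $u_p \not\equiv 0$ and $u_p$ is admissible in the definition of $S_p$. By the weak lower semicontinuity of the Hilbert norm $\|\cdot\|$ on $X$, we obtain
\[
\|u_p\| \leq \liminf_{n\to\infty} \|u_n\| = S_p,
\]
while the definition of $S_p$ together with $\|u_p\|_p = 1$ forces $\|u_p\| \geq S_p$. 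Hence $\|u_p\| = S_p$, and the infimum is attained.

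It remains to arrange that the minimizer is nonnegative. For this I would replace $u_p$ by $|u_p|$. Clearly $|u_p| \in L^2(\R)$ with $\||u_p|\|_2 = \|u_p\|_2$ and $\||u_p|\|_p = \|u_p\|_p = 1$, and the pointwise inequality
\[
\bigl||u_p|(x) - |u_p|(y)\bigr| \leq |u_p(x) - u_p(y)|
\]
together with $V(x)|u_p|^2 = V(x)u_p^2$ shows that $|u_p| \in X$ with $\||u_p|\| \leq \|u_p\| = S_p$. Since $\||u_p|\|_p = 1$, the reverse inequality is automatic, and therefore $|u_p|$ is a nonnegative minimizer, proving the lemma.

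No step is truly delicate: the only point that requires care is confirming that the Gagliardo seminorm is non-increasing under taking absolute value, which follows from the elementary inequality above (the same reasoning used for classical $H^{1/2}$). Boundedness of the minimizing sequence, weak lower semicontinuity, and strong $L^p$-convergence via Lemma \ref{lema3.1} are all standard once the compactness lemma is in hand.
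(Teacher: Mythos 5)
Your proof is correct and follows essentially the same route as the paper: bounded minimizing sequence, weak convergence, strong $L^p$-convergence via Lemma \ref{lema3.1}, and weak lower semicontinuity of the norm. The only cosmetic difference is that you take absolute values of the weak limit at the end, whereas the paper replaces $u_n$ by $|u_n|$ at the start of the minimizing sequence; both rely on the same pointwise inequality $\bigl||a|-|b|\bigr|\leq|a-b|$ for the Gagliardo seminorm, which you make explicit and the paper leaves implicit.
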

\begin{proof}
Let $(u_{n})$ be a minimizing sequence of non-negative functions
(if necessary, replace $u_{n}$ by $|u_{n}|$) for $S_{p}$ in
$X$, that is,
\begin{equation*}
\|u_{n}\|_{p}=1\quad \text{and} \quad
\left(\dfrac{1}{2\pi} \displaystyle{\int_{\R^{2}}\dfrac{(u_{n}(x)-u_{n}(y))^{2}}{|x-y|^{2}}\, \ud x\, \ud y}
+ \int_{\R} V(x)u_{n}^{2}\, \ud x\right)^{1/2}\rightarrow
S_{p}.
\end{equation*}
Then, $(u_{n})$ is
bounded in $X$. Since $X$ is Hilbert and $X$ is compactly embedded into $L^{p}(\R)$, up to a subsequence, we may assume
\[
\begin{aligned}
& u_n \rightharpoonup u_p \quad\mbox{weakly in}\,\, X,\\
& u_n \rightarrow u_p  \quad\mbox{strongly in}\,\,
L^p(\mathbb{R}),\\
& u_n(x) \rightarrow u_p(x)  \quad\mbox{almost everywhere in}\,\,
\mathbb{R}.
\end{aligned}
\]
Consequently, we have
\begin{equation*}
\|u_{p}\|=1, u_{p}(x)\geq 0 \quad \text{and} \quad \|u_{p}\|\leq \liminf_{n\rightarrow +\infty} \|u_{n}\|=S_{p}.
\end{equation*}
Thus, $S_{p}=\|u_{p}\|$. This completes the proof.
\end{proof}

Now we prove the main result of this section.

\begin{lemma}\label{nivelabaixo}
Suppose that $(V_1)-(V_3)$ and $(f_5)$ are satisfied, if $\|h\|_{*}$ is sufficiently small then
\begin{equation*}
\max_{t\geq 0} I(tu_{p})<\dfrac{\pi\kappa\omega}{\alpha_0}.
\end{equation*}
\end{lemma}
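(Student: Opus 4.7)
The plan is to exploit hypothesis $(f_5)$ to replace the nonlinearity by a pure power, reducing the computation of $\max_{t\geq 0} I(tu_p)$ to a one-variable optimization whose maximum is then compared with $\pi\kappa\omega/\alpha_0$ via the explicit lower bound on $C_p$.

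First I would use that $u_p \geq 0$ by Lemma \ref{lemasq}, so integrating $(f_5)$ gives $F(x,s) \geq \frac{C_p}{p} s^p$ for all $s \geq 0$. Combined with $\|u_p\| = S_p$, $\|u_p\|_p = 1$, and $|(h,u_p)| \leq \|h\|_* S_p$, this yields, for every $t \geq 0$,
\[
I(tu_p) \leq \frac{t^2}{2} S_p^2 - \frac{C_p t^p}{p} + t S_p \|h\|_*.
\]
Set $\varphi_0(t) = \frac{t^2}{2} S_p^2 - \frac{C_p t^p}{p}$. Its unique positive critical point is $t^* = (S_p^2/C_p)^{1/(p-2)}$, and a direct computation gives
\[
\max_{t\geq 0}\varphi_0(t) = \varphi_0(t^*) = \frac{p-2}{2p} \cdot \frac{S_p^{2p/(p-2)}}{C_p^{2/(p-2)}}.
\]
The key point, which is in fact the design of hypothesis $(f_5)$, is that after a short algebraic manipulation the lower bound
\[
C_p > \left[\frac{\alpha_0(p-2)}{2\pi\kappa\omega p}\right]^{(p-2)/2} S_p^p
\]
is seen to be exactly equivalent to $\varphi_0(t^*) < \pi\kappa\omega/\alpha_0$. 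Consequently there is an $\epsilon_0 > 0$ such that $\max_{t \geq 0} \varphi_0(t) \leq \pi\kappa\omega/\alpha_0 - \epsilon_0$.

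It remains to absorb the linear perturbation $t S_p \|h\|_*$. Since $\varphi_0(t) \to -\infty$ as $t \to \infty$ at rate $-C_p t^p/p$ with $p>2$, the linear term is dominated uniformly in $\|h\|_* \leq 1$, so there is a fixed $T>0$ (depending only on $S_p$, $C_p$, $p$) such that $\varphi_0(t) + tS_p\|h\|_* < 0$ whenever $t \geq T$ and $\|h\|_* \leq 1$. On $[0,T]$ the perturbation is controlled by $TS_p\|h\|_*$, hence
\[
\max_{t\geq 0}\bigl[\varphi_0(t)+tS_p\|h\|_*\bigr] \leq \varphi_0(t^*) + TS_p\|h\|_* \leq \frac{\pi\kappa\omega}{\alpha_0} - \epsilon_0 + TS_p\|h\|_*,
\]
and choosing $\|h\|_* < \epsilon_0/(2TS_p)$ yields the required strict inequality. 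The only delicate point in the argument is the algebraic verification in the middle step; once the equivalence between the bound on $C_p$ in $(f_5)$ and the threshold $\pi\kappa\omega/\alpha_0$ is pinned down, the perturbation argument is routine.
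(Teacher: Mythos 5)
Your proof is correct and follows essentially the same route as the paper: bound $I(tu_p)$ above by $\frac{t^2}{2}S_p^2-\frac{C_p}{p}t^p$ plus the $h$-term, maximize the pure-power expression explicitly, and observe that the lower bound on $C_p$ in $(f_5)$ is precisely the condition making that maximum fall strictly below $\pi\kappa\omega/\alpha_0$. Your explicit two-regime absorption of the linear term $tS_p\|h\|_*$ (negativity for $t\geq T$, uniform smallness on $[0,T]$) is in fact a welcome sharpening of the paper's one-line ``taking $\|h\|_*$ sufficiently small the result follows.''
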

\begin{proof}
Let $\Psi:[0,+\infty)\rightarrow\R$ given by
\begin{equation*}
\Psi(t)=\dfrac{t^{2}}{2}\left(\dfrac{1}{2\pi}\int_{\R^{2}}\dfrac{(u_{p}(x)-u_{p}(y))^{2}}{|x-y|^{2}}\,
\ud x \ud y + \int_{\R} V(x)u_{p}^{2}\,\ud x \right)-\int_{\R}
F(x,tu_{p})\,\ud x.
\end{equation*}
By Lemma \ref{lemasq} and $(f_{5})$, we get
\begin{equation}\label{51}
\Psi(t)\leq\dfrac{t^{2}}{2}S^{2}_{p}-\dfrac{C_{p}}{p}t^{p}\leq
\max_{t\geq 0}\left[\dfrac{t^{2}}{2}S_{p}^{2}-\dfrac{
C_{p}}{p}t^{p}\right]=\dfrac{(p-2)}{2 p}\dfrac{S_{p}^{2p/(p-2)}}{
C_{p}^{2/(p-2)}}<\dfrac{\pi\kappa\omega}{\alpha_{0}}.
\end{equation}
To conclude, note that $|(h,u_{p})|\leq \|h\|_{*}\|u_{p}\|$. Thus taking $\|h\|_{*}$ sufficiently small and using \eqref{51} the result follows.
\end{proof}

\section{Proofs of Theorem \ref{teodeexistencia} and
\ref{teodeexistencia2}}

\hspace{0,6cm}By Lemmas \ref{geometria1}, \ref{geometria2} the
functional $I$ satisfies the geometric properties of the Mountain
Pass Theorem. As a consequence, the minimax level
\begin{equation*}
c_{m}=\inf_{g\in\Gamma} \max_{t\in[0,1]} I(g(t)) >0,
\end{equation*}
where $\Gamma=\{g\in C([0,1],X):g(0)=0,g(1)=e\}$.

By Lemma \ref{nivelabaixo} and Proposition \ref{PS}, the
functional $I$ satisfies the $(PS)_{c_{m}}$ condition. Therefore,
by the mountain-pass Theorem the functional $I$ has a critical
point $u_{m}$ at the minimax level $c_{m}$.

Moreover, if $h \in X^{*}$ with $h \not \equiv 0$, we can find a
second solution. To this, we consider $\rho_{h}$ like in Lemma
\ref{geometria1}. Observe that $\overline{B}_{\rho_{h}}$ is a
complete metric space with the metric induced by the norm of $X$ and
convex, and the functional $I$ is of class $C^{1}$ and bounded
below on $\overline{B}_{\rho_{h}}$. Thus, by Ekeland variational
principle there exists a sequence $(u_{n})$ in
$\overline{B}_{\rho_{h}}$ such that
\begin{equation*}
I(u_{n})\rightarrow c_{0}=\inf_{\|u\|\leq \rho_{h}} I(u)<0 \quad
\text{and} \quad \|I'(u_{n})\|_{*}\rightarrow0.
\end{equation*}
Hence, by Proposition \ref{PS} the functional $I$ satisfies the $(PS)_{c_{0}}$ condition. Consequently, there exists $u_0 \in X$ such that
$I'(u_{0})=0$ and $I(u_0) = c_0$, that is, $u_{0}$ is a weak solution of \eqref{problema} at level $c_{0}$.

Thus it is completed the proof of the results.

\end{document}